\def\a{{\alpha}}
\def\g{{\gamma}}
\def\b{{\beta}}
\def\O{{\Omega}}
\newcommand{\R}{{\mathbb R}}
\newcommand{\N}{{\mathbb N}}
\newcommand{\htwo}{{\mathbb H^2}}
\newcommand{\htwor}{{\mathbb H^2}\times {\mathbb R} \ }
\def\t{{\theta}}
\def\G{{\Gamma}}
\newtheorem{theorem}{Theorem}[section]
\newtheorem{lemma}[theorem]{Lemma}
\newtheorem{proposition}[theorem]{Proposition}
\newtheorem{remark}[theorem]{Remark}
\newtheorem{definition}[theorem]{Definition}
\begin{document}
\title{Saddle Towers in $\htwo\times\R$}
\author{Filippo Morabito}
\author{M. Magdalena Rodr\'\i guez}
\thanks{The first author is supported by a grant of Instituto de
  Matem\'atica Interdisciplinar of Universidad Complutense de Madrid.
  The second author is partially supported by a Spanish MEC-FEDER
  Grant no. MTM2007-61775 and a Regional J. Andaluc\'\i a Grant no.
  P06-FQM-01642.}
\address{Instituto de Matem\'atica Interdisciplinar, Universidad
  Complutense de Madrid, Plaza de las Ciencias 3, 28040, Madrid,
  Spain} \email{morabitf@gmail.com}
\address{Departamento de \'Algebra, Universidad Complutense de Madrid,
  Plaza de las Ciencias~3, 28040, Madrid, Spain}
\email{magdalena@mat.ucm.es}

\begin{abstract} 
  Given $k\geqslant 2$, we construct a $(2k-2)$-parameter family of
  properly embedded minimal surfaces in $\htwo\times\R$ invariant by a
  vertical translation $T$, called {\it Saddle Towers}, which have
  total intrinsic curvature $4 \pi(1-k),$ genus zero and $2k$ vertical
  Scherk-type ends in the quotient by $T$. As limits of those Saddle
  Towers, we obtain Jenkins-Serrin graphs over ideal polygonal domains
  (with total intrinsic curvature $2\pi(1-k)$); we also get properly
  embedded minimal surfaces which are symmetric with respect to a
  horizontal slice and have total intrinsic curvature $4 \pi(1-k)$,
  genus zero and $k$ vertical planar ends.
\end{abstract}
\maketitle

\section{Introduction}

H. F. Scherk~\cite{S} found a singly periodic minimal surface 
in $\R^3$
invariant by a vertical translation, which can be seen as the
desingularization of two orthogonal vertical planes. This is the
conjugate surface of the doubly periodic minimal surface obtained from
the graph surface of 
\[
u(x,y)=\log\frac{\cos x}{\cos y},\quad |x|<\frac\pi 2,
\quad |y|<\frac\pi 2 \, ,
\]
rotating it by an angle $\pi$ about the straight vertical lines
in its boundary.  Such singly periodic minimal surface can be seen in
a 1-parameter family of singly periodic minimal surfaces invariant by
a vertical translation, by changing the angle between the vertical
planes. They are called {\it singly periodic Scherk minimal examples}.

\medskip

In general, consider a convex polygonal domain $\Omega\subset\R^2$
with $2k$ edges of length one, with $k\geqslant 2$.  Mark its edges
alternately by $+\infty$ and $-\infty$.  H. Jenkins and
J. Serrin~\cite{JS} gave necessary and sufficient conditions for the
existence of a function $u$ defined on $\Omega$ which goes to $\pm
\infty$ on the edges, as indicated by the marking. To satisfy such
conditions, $\Omega$ is assumed to be different from a parallelogram
bounded by two sides of length one and two sides of length $k-1$, for
$k\geqslant 3$ (see for instance Mazet, Rodríguez and
Traizet~\cite[Proposition 1.3]{MRT}).

The graph surface $\Sigma_u$ of $u$ is bounded by $2k$ vertical
straight lines above the vertices of $\Omega$.  The conjugate minimal
surface of $\Sigma_u$ is then bounded by $2k$ horizontal symmetry
curves, lying in two horizontal planes at distance one from each
other.  By reflecting about one of the two symmetry planes, we obtain
a fundamental domain for a properly embedded singly periodic minimal
surface $M$ of period $T=(0,0,2)$. In the quotient by~$T$, $M$ has
genus zero and $2k$ ends asymptotic to flat vertical annuli (quotients
of vertical half-planes by $T$). This kind of ends are classically
called Scherk-type ends. Remark that changing the length $\ell$ of the
edges of $\Omega$ gives nothing but $M$ rescaled by $\ell$. This is
why we can fix $\ell=1$.

This procedure provides for $k=2$ the $1$-parameter family of Scherk
examples; and for any $k\geqslant 3$, a $(2k-3)$-parameter family of
examples, which were constructed by H.~Karcher~\cite{K1,K2} and called
{\it Saddle Towers}. These examples have recently been classified by
J. P\'erez and M. Traizet~\cite{PT} as the only complete embedded
singly periodic minimal surfaces in~$\R^3$ with genus zero and
finitely many Scherk-type ends in the quotient.

\medskip

In this paper we follow the same strategy in $\htwo\times\R$ to
construct properly embedded singly periodic minimal surfaces in
$\htwo\times\R$ invariant by a vertical translation $T$, with genus
zero and $2k$ vertical Scherk-type ends in the quotient by $T$. We say
that an end is a vertical Scherk-type end when it is asymptotic to the
quotient by $T$ of half a vertical geodesic plane.

\begin{theorem}\label{th:main}
  Given $k\geqslant 2$ and a vertical translation $T$, there exists a
  $(2k-3)$-parameter family of properly embedded singly periodic
  minimal surfaces in $\htwo\times\R$ with total (intrinsic) curvature
  $4\pi(1-k)$, genus zero and $2k$ vertical Scherk-type ends in the
  quotient by $T$. We call them {\it Saddle Towers}.
\end{theorem}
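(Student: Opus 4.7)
The plan is to follow the strategy outlined in the introduction: mimic the Euclidean Karcher construction of saddle towers in $\R^3$, replacing the convex Euclidean $2k$-gon by an ideal $2k$-gon in $\htwo$ and the Lawson correspondence by the conjugation of Daniel in $\htwo\times\R$.

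First I would fix an ideal $2k$-gon $\Omega\subset\htwo$ with vertices $p_1,\ldots,p_{2k}\in\partial_\infty\htwo$ and geodesic sides, mark its sides alternately by $+\infty$ and $-\infty$, and solve the corresponding Jenkins--Serrin infinite boundary value problem to produce a minimal graph $u\colon\Omega\to\R$. Such a solution is furnished by the theory in $\htwo\times\R$ developed by Nelli--Rosenberg, Collin--Rosenberg and Mazet--Rodr\'\i guez--Traizet; the compatibility conditions involve truncations by horocycles at each ideal vertex $p_i$ and lead, as in \cite[Proposition~1.3]{MRT}, to the exclusion of a parallelogram-type degenerate configuration. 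The moduli space of admissible ideal $2k$-gons modulo $\mathrm{Isom}(\htwo)$ has dimension $2k-3$, since $\partial_\infty\htwo$ is one-dimensional and $\mathrm{Isom}(\htwo)$ is three-dimensional.

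Next I would apply the Daniel sister correspondence to pass from the graph $\Sigma_u$ to its conjugate minimal surface $\Sigma_u^*$. The boundary of $\Sigma_u$, consisting of vertical asymptotic rays over the edges of $\Omega$ and vertical geodesics shooting up to each $p_i\times\R$, corresponds under conjugation to $2k$ horizontal geodesic arcs of $\Sigma_u^*$ lying in two horizontal slices $\htwo\times\{0\}$ and $\htwo\times\{c\}$, which can be normalized so that $2c$ is the period of $T$. These arcs are planar symmetry curves of $\Sigma_u^*$, so Schwarz reflection across either slice extends $\Sigma_u^*$ to a properly embedded minimal surface $M\subset\htwo\times\R$ invariant by $T$. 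A fundamental domain $M/T$ is the double of $\Sigma_u^*$ across a symmetry slice, topologically a $2k$-punctured sphere; hence $M/T$ has genus zero and $2k$ ends, each being the quotient by $T$ of a vertical geodesic half-plane above $p_i$, i.e. a vertical Scherk-type end. Applying Gauss--Bonnet after truncating the ends by horocyclic neighborhoods and letting the truncation parameter go to infinity (the Scherk-type ends being asymptotic to totally geodesic pieces, their geodesic curvature contribution vanishes in the limit), one gets $\int_{M/T}K=2\pi\chi(M/T)=4\pi(1-k)$.

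The $(2k-3)$ parameters of the family are inherited from the moduli of ideal polygons used as bases. The main obstacle will be the first step: establishing the Jenkins--Serrin existence together with sharp enough asymptotic control of $u$ near each ideal vertex $p_i$, so that under conjugation one recovers genuine Scherk-type ends for the reflected surface. Embeddedness of $M$ is itself delicate and, as in the Euclidean case \cite{K1,K2,PT}, should be reduced to showing that an appropriate half of the fundamental domain is a graph over a suitable totally geodesic half-plane; the adaptation of this graph-type argument to the hyperbolic base geometry is the principal technical content of the proof, while counting parameters, topology, ends and total curvature follows from the above scheme.
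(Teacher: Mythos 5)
Your construction starts from the wrong base domain, and this is not a cosmetic slip: it makes the whole scheme fail to produce a periodic surface. You take $\Omega$ to be an \emph{ideal} $2k$-gon, with all vertices $p_i\in\partial_\infty\htwo$. But then the Jenkins--Serrin graph $\Sigma_u$ over $\Omega$ has \emph{empty} boundary --- there are no ``vertical geodesics shooting up to each $p_i\times\R$'' because the vertices are at infinity, so $\Sigma_u$ is a complete surface without boundary. Consequently its conjugate $\Sigma_u^*$ also has no boundary, there are no horizontal symmetry curves in two slices to reflect across, no period $T$ appears, and no Schwarz extension is possible. In fact the paper shows (Theorem~\ref{th:limitsJS}) that the conjugate of a Jenkins--Serrin graph over an ideal polygon is again a Jenkins--Serrin graph over an ideal polygon: the family is self-conjugate, non-periodic, and has total curvature $2\pi(1-k)$, not $4\pi(1-k)$. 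These ideal-polygon surfaces are the \emph{limits} of Saddle Towers as the period length diverges (Theorems~\ref{th:limits} and~\ref{th:limitsJS}), not the Saddle Towers themselves.

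The correct starting point is a \emph{bounded} convex polygonal domain $\Omega\subset\htwo$ with $2k$ geodesic edges all of the same finite length $\ell$, satisfying the Jenkins--Serrin conditions of Definition~\ref{Def:JS} (which in $\htwo$ exclude more configurations than just the Euclidean parallelogram case; see the examples after Theorem~\ref{th:JS}). The graph over such $\Omega$ is bounded by $2k$ vertical straight lines over the vertices $p_i\in\htwo$; conjugation converts these into $2k$ horizontal curvature lines of symmetry, and the key technical step --- Proposition~\ref{parallel.planes} --- is to show that these curves distribute alternately into exactly two horizontal slices $\{t=0\}$ and $\{t=\ell\}$, that their projections are nonconvex and pairwise asymptotic at $\partial_\infty\htwo$, and that $\Sigma^*$ lies strictly between the two slices. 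Only then does reflection in $\{t=\ell\}$ followed by iteration give a properly embedded surface of period $T=(0,0,2\ell)$; embeddedness comes from Krust's theorem (Theorem~\ref{th:krust}), not from a graph-over-a-half-plane argument. The $(2k-3)$ parameters are the interior angles $\theta_2,\ldots,\theta_{2k-2}$ of the bounded polygon with one edge fixed, not the moduli of ideal polygons. Your Gauss--Bonnet computation and topological count are fine in spirit, but they must be applied to this bounded-domain construction, and the paper instead quotes the Nelli--Rosenberg total curvature $2\pi(1-k)$ for the graph piece and doubles it.
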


Independently, H. Lee and J. Pyo~\cite{LP} have recently constructed
symmetric Saddle Towers following two different approaches: the
conjugation method explained above and a barrier method (see
Remark~\ref{rem:symmetry}).

\medskip

We observe that we do not have homotheties in $\htwo\times\R$, so the
length of $T$ gives us another parameter of the family. Then, we
obtain a $(2k-2)$-parameter family of Saddle Towers.  The following
theorems gives us possible limits of Saddle Towers when the length of
$T$ goes to $+\infty$.

\begin{theorem}\label{th:limits}
  Given $k\geqslant 2$, there exists a $(2k-3)$-parameter family of
  properly embedded minimal surfaces in $\htwo\times\R$ with total
  (intrinsic) curvature $4\pi(1-k)$, genus zero and $k$ ends, each one
  asymptotic to a vertical geodesic plane. Those surfaces are
  invariant by the reflection symmetry about $\htwo\times\{0\}$, and
  can be obtained as limits of Saddle Towers invariant by
  $T=(0,0,2\ell)$, with $\ell$ diverging to $+\infty$.
\end{theorem}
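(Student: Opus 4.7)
The plan is to produce $M_\infty$ as a smooth subsequential limit on compact sets of a family of Saddle Towers. Fix a choice of the $(2k-3)$ shape parameters in Theorem~\ref{th:main}, let $\ell_n\to+\infty$, and let $M_n$ denote the corresponding Saddle Tower of period $T_n=(0,0,2\ell_n)$. After a vertical translation, one may assume that $\htwo\times\{0\}$ is a horizontal symmetry plane of every $M_n$, so each $M_n$ is invariant by reflection through that slice.

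\textbf{Smooth convergence on compact sets.} The first step is to establish uniform bounds on the second fundamental form of $\{M_n\}$ on compact subsets of $\htwo\times\R$. The common skeleton of horizontal symmetry arcs lying in $\htwo\times\{0\}$, whose geometry depends only on the fixed shape parameters, provides a fixed anchor that prevents local degeneration near the symmetry slice. Combined with the embeddedness of each $M_n$ and the asymptotic control of its Scherk-type ends, standard compactness results for sequences of properly embedded minimal surfaces of locally bounded curvature yield, along a subsequence, smooth convergence of $M_n$ on compact sets to a properly embedded minimal surface $M_\infty\subset\htwo\times\R$. By smooth convergence, $M_\infty$ inherits the reflection symmetry across $\htwo\times\{0\}$.

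\textbf{Topology, ends, and total curvature.} Consider the half-fundamental piece $P_n=M_n\cap(\htwo\times[0,\ell_n])$, whose boundary consists of $k$ horizontal symmetry arcs at $\htwo\times\{0\}$, $k$ more at $\htwo\times\{\ell_n\}$, and $2k$ asymptotic vertical strips coming from the Scherk-type ends truncated to the slab. As $\ell_n\to+\infty$, the $k$ top symmetry arcs escape to vertical infinity together with the portions of the columns attached to them, while the $k$ bottom symmetry arcs persist and pair up the surviving columns into $k$ limiting pieces. Reflecting across $\htwo\times\{0\}$, each such pair together with its bottom symmetry arc extends to an end of $M_\infty$ asymptotic to a complete vertical geodesic plane, giving the $k$ ends in the statement. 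Genus zero follows from the topology of $M_n$ in bounded slabs passing to the limit. The total intrinsic curvature $4\pi(1-k)$ is obtained by passing to the limit in Gauss-Bonnet applied to one full fundamental domain of $M_n$: the bulk contribution equals $4\pi(1-k)$ for every $n$, and the boundary geodesic-curvature terms along the horizontal symmetry arcs at height $\pm\ell_n$ and along curves truncating each end vanish in the limit, since the asymptotic geometry becomes totally geodesic.

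\textbf{Parameter count and main obstacle.} Different choices of the $(2k-3)$ fixed shape parameters yield mutually distinct limit surfaces, because they are distinguished by their $k$ asymptotic vertical planes, which depend injectively on the shape parameters. This produces the claimed $(2k-3)$-parameter family. The main obstacle lies in the convergence step and in the asymptotic identification: with the period $2\ell_n$ diverging, periodicity cannot be used to derive curvature estimates, and one must instead construct explicit barriers in $\htwo\times\R$ to confine the sequence. Natural candidates are vertical geodesic planes (to separate and enclose the $2k$ columns of each $M_n$) and Jenkins-Serrin graphs over appropriate subdomains of $\htwo$ (to control the surfaces near $\htwo\times\{0\}$). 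These barriers also serve to rule out catenoid-type necks, prevent the escape of parts of $M_n$ into $\partial_\infty\htwo\times\R$, and ensure that the $2k$ half-plane columns of $M_n$ truly pair up into $k$ complete vertical geodesic planes in the limit, rather than peeling off separately.
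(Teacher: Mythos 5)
Your proposal captures the right geometric picture (the $2k$ Scherk-type ends pairing off into $k$ planar ends, the reflection symmetry surviving in the limit), but it leaves the essential step unproved and sets up the limit in a way that cannot produce the desired configuration. Two concrete problems. First, you ``fix the $(2k-3)$ shape parameters of Theorem~\ref{th:main} and let $\ell_n\to+\infty$'': those parameters are interior angles of a geodesic polygon all of whose edges have length $\ell_n$, and along any sequence of Saddle Towers converging to a surface with $k$ planar ends the angles at alternate vertices must tend to $0$ (those vertices escape to $\partial_\infty\htwo$ while the other $k$ stay fixed in $\htwo$). So the shape parameters cannot be held fixed; the correct object to fix is the limiting semi-ideal domain, and producing bounded Jenkins-Serrin domains $\Omega_n$ with $2k$ equal edges of length $\ell_n\to+\infty$ converging to it is itself nontrivial --- this is the content of condition~($\star$) and Lemmas~\ref{lem:horocycles} and~\ref{lem:JS}. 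Relatedly, the ``common skeleton of horizontal symmetry arcs at height $0$'' is not a fixed anchor: those arcs are conjugates of vertical boundary lines and move with $n$; that they remain at bounded mutual distance and at a common bounded height is something that must be proved (the paper does it via the intrinsic bound $\mathrm{dist}_{\Sigma_n}(\Gamma_i,\Gamma_j)\leqslant C$ on the graph side, transferred through the isometry of conjugation). Second, and more seriously, the whole convergence step --- curvature bounds, no loss of topology, no drifting into $\partial_\infty\htwo\times\R$, the actual pairing of the half-plane ends --- is exactly what you yourself flag as ``the main obstacle'' and only gesture at with unspecified barriers; as written this is a description of what needs to be done, not a proof.

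The paper sidesteps all of this by never extracting a limit of the Saddle Towers in space. It constructs the limit surface directly: take a semi-ideal Jenkins-Serrin domain $\Omega$ (odd vertices in $\htwo$, even vertices at $\partial_\infty\htwo$, satisfying ($\star$)), let $\Sigma$ be its Jenkins-Serrin graph from Theorem~\ref{th:JS}, and let $M$ be the conjugate graph $\Sigma^*$ doubled by reflection across $\{t=0\}$; the $k$ planar ends, genus zero and total curvature $4\pi(1-k)$ then come from Proposition~\ref{parallel.planes}, Theorem~\ref{th:krust} and the Collin--Rosenberg computation of the total curvature of $\Sigma$. The statement that $M$ is a limit of Saddle Towers is proved entirely on the graph side, where it reduces to uniform convergence $u_n\to u$ of solutions of the minimal graph equation over the converging domains $\Omega_n\to\Omega$; this follows from the absence of divergence lines (because $\Omega$ is a Jenkins-Serrin domain), so no curvature estimates for surfaces in space are needed, and the convergence is carried to the conjugates because conjugation preserves the induced metric. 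To complete your route you would essentially have to rebuild this machinery; I recommend reorganizing the argument around the conjugate of the Jenkins-Serrin graph over the semi-ideal domain.
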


Theorem~\ref{th:limits} includes the 1-parameter family that
J. Pyo~\cite{P} has independently constructed very recently. These are
the examples explained in Remark~\ref{rem:symmetrySI}.

Note that Theorem~\ref{th:limits} provides the existence of properly
embedded minimal surfaces with genus zero and $k\geqslant 3$ ends in
$\htwo\times\R$, which is not possible when the ambient space is
$\R^3$.  This shows again that $\htwor$ allows for a wider variety of
examples than $\R^3$.

L. Hauswirth and H. Rosenberg prove in~\cite{HR} that, when it is
finite, the total curvature of a complete embedded minimal surface in
$\htwor$ is a multiple of $2\pi$, and give simply connected examples
whose total curvature is $-2\pi m$, for each non-negative integer $m$.
They also suggest that it would be interesting to construct non-simply
connected examples of finite total curvature; for example an annulus
of total curvature $-4\pi$.  Theorem~\ref{th:limits} includes such
examples.

The examples constructed by Hauswirth and Rosenberg are minimal graphs
over polygonal domains with $2(m+1)$ edges, whose vertices are at
infinity (called ideal polygonal domains), with boundary values
$\pm\infty$ alternately. We call these graphs {\it Jenkins-Serrin
  graphs over ideal polygonal domains}. The Jenkins-Serrin problem
over ideal polygonal domains was studied by P. Collin and H. Rosenberg
in~\cite{CR}.  In Section~\ref{ideal} we prove that the Jenkins-Serrin
graphs over ideal polygonal domains can be obtained by taking limits
of half of the complete surfaces satisfying Theorem~\ref{th:limits} or
taking limits of Saddle Towers.

\begin{theorem}\label{th:limitsJS}
  Given $k\geqslant 2$, let $\mathcal{F}_k$ be the $(2k-3)$-parameter
  family of Jenkins-Serrin graphs over an ideal polygonal domains with
  $2k$ edges. Then $\mathcal{F}_k$ is self-conjugate in the sense that
  the conjugate surface of a graph in $\mathcal{F}_k$ also belongs
  to~$\mathcal{F}_k$. Furthermore, the graphs in $\mathcal{F}_k$ can
  be obtained as limits of Saddle Towers with $2k$ ends, invariant by
  $T=(0,0,2\ell)$, when $\ell$ diverges to $+\infty$.
\end{theorem}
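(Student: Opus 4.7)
The statement splits into two independent assertions, which I plan to address in turn: the \emph{self-conjugacy} of the family $\mathcal{F}_k$ under the Daniel-type conjugation in $\htwo\times\R$, and the \emph{realization} of every element of $\mathcal{F}_k$ as a limit of Saddle Towers invariant by $T=(0,0,2\ell)$ as $\ell\to+\infty$.

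For self-conjugacy, consider $u\in\mathcal{F}_k$ with graph $\Sigma_u$ over an ideal polygon $\Omega\subset\htwo$ with $2k$ edges labelled alternately $\pm\infty$. Since $\Omega$ is simply connected, $\Sigma_u$ is a complete, simply connected minimal surface, so its conjugate $\Sigma_u^*$ is a well-defined complete minimal immersion. My plan is to identify $\Sigma_u^*$ as the graph of another element of $\mathcal{F}_k$ by tracking asymptotic boundary data under conjugation. The asymptotic boundary of $\Sigma_u$ consists of $2k$ vertical half-planes in $\partial_\infty(\htwo\times\R)$, one above each labelled edge of $\Omega$ at height $\pm\infty$, joined along $2k$ complete vertical asymptotic geodesics above the ideal vertices of $\Omega$. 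Under conjugation, the vertical asymptotic geodesics over the vertices of $\Omega$ become new vertical half-planes carrying the opposite labels $\mp\infty$, while the labelled half-planes of $\Sigma_u$ become vertical asymptotic geodesics of $\Sigma_u^*$. This swap between ``edges'' and ``vertices'' identifies a new ideal polygon $\Omega^*\subset\htwo$ with $2k$ alternately labelled edges, showing that $\Sigma_u^*$ is the graph of a function $u^*$ over $\Omega^*$ with $u^*\in\mathcal{F}_k$.

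For the limit assertion, fix $u\in\mathcal{F}_k$ over an ideal polygon $\Omega$ with $2k$ edges. I would build, for each large $\ell>0$, a compact geodesic polygon $\Omega_\ell\subset\htwo$ whose vertices converge to the ideal vertices of $\Omega$ as $\ell\to+\infty$, with edge lengths adjusted so that the Jenkins-Serrin problem on $\Omega_\ell$ with the same alternation of labels admits a solution $u_\ell$ of vertical extent exactly $2\ell$. By Theorem~\ref{th:main}, the conjugate of the graph of $u_\ell$ is the fundamental piece of a Saddle Tower $M_\ell$ of period $T_\ell=(0,0,2\ell)$. The plan is to show that $u_\ell$, after a suitable vertical normalization, converges smoothly on compact subsets of $\Omega$ to $u$. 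I would use uniform curvature estimates for stable minimal surfaces (since graphs are stable), together with barrier arguments built from vertical geodesic planes and smaller Jenkins-Serrin graphs in the style of Collin-Rosenberg~\cite{CR}, to extract the limit and identify it with $u$. A dimension count matching the $(2k-3)$-parameter families on both sides then guarantees that all of $\mathcal{F}_k$ is attained.

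The main obstacle is in the second part: controlling both the vertical extent and the boundary asymptotics of $u_\ell$ simultaneously as the approximating polygons $\Omega_\ell$ degenerate to~$\Omega$. One needs barriers that remain effective throughout the degeneration, forcing $u_\ell\to\pm\infty$ uniformly on compact subsets of each labelled edge of $\Omega$ while preventing vertical drift of the graph and loss of its graph character in the limit. The combination of the Collin-Rosenberg barrier method with a careful choice of~$\Omega_\ell$ enforcing the Jenkins-Serrin height condition at each $\ell$ is where the substantive work lies; by contrast, the self-conjugacy reduces to a bookkeeping of asymptotic boundary data once the existence and uniqueness of the conjugate are in hand.
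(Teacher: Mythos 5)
Your overall plan for the limit assertion (approximate the ideal polygon by compact equilateral geodesic polygons, solve Jenkins--Serrin on each, conjugate, and pass to the limit) is the same as the paper's, but two of the steps you defer are precisely where the proof lives, and your treatment of self-conjugacy has a genuine gap. First, the self-conjugacy: you assert that conjugation ``swaps'' the asymptotic vertical half-planes over the edges of $\O$ with the asymptotic vertical geodesics over its ideal vertices, and call this bookkeeping. Nothing in the conjugation machinery (Theorem~\ref{th:conjugate}, Lemma~\ref{lem:Schwarz}, Theorem~\ref{th:krust}) acts on asymptotic boundary data; those results transform actual boundary curvature lines and geodesics, and an ideal Jenkins--Serrin graph has \emph{empty} boundary. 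Krust gives you that $\Sigma_u^*$ is a graph, but identifying the domain $\O^*$ and the boundary values $\pm\infty$ on its edges is exactly the content of the claim, and you give no mechanism for it. The paper does not prove self-conjugacy independently: it \emph{derives} it from the limit construction, by observing that the conjugate graphs $\Sigma_n^*$ of the compact approximations have boundary curves in two horizontal planes at heights $-t_n$ and $\ell_n-t_n$, and that both $t_n$ and $\ell_n-t_n$ diverge (because the intrinsic distance from a fixed interior point to $\partial\Sigma_n$, hence to $\partial\Sigma_n^*$, diverges). This forces the limiting boundary values to be $-\infty$ on the $\Gamma_i^*$ and $+\infty$ on the $\eta_i^*$, so the limit $\Sigma^*$ is itself an ideal Jenkins--Serrin graph. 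So the two assertions of the theorem are not independent, contrary to your setup; proving the limit statement first is what makes self-conjugacy accessible.

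Second, in the limit part the substantive work you acknowledge but do not do is showing that the approximating equilateral polygons are Jenkins--Serrin domains for large $\ell$; this is not a routine adjustment. The paper goes through an intermediate stage (ideal $\to$ semi-ideal $\to$ bounded, with a diagonal argument), constructs the new vertices as intersections of explicit horocycles $C_{2i}(n)$, and proves two lemmas verifying the inequalities $2\a(\mathcal{P})<|\Gamma(\mathcal{P})|$ for all inscribed polygons, by showing that any violating inscribed polygon would force a vertex into a horodisk excluded by the Jenkins--Serrin condition on $\O$ (Lemma~\ref{lem:horocycles} and its analogues). Without some version of this, your $u_\ell$ may simply fail to exist. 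Also, your closing ``dimension count'' does not establish that every graph in $\mathcal{F}_k$ is attained: equality of parameter counts does not give surjectivity. The paper avoids this entirely by starting from an \emph{arbitrary} ideal Jenkins--Serrin domain and building its approximating sequence, so surjectivity is automatic. (A minor point: the period $2\ell$ of the Saddle Tower comes from the common edge length $\ell$ of the approximating polygon, via the conjugate surface lying between horizontal planes at distance $\ell$; it is not a ``vertical extent'' of $u_\ell$, which is always infinite.)
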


\section{Preliminaries}
\label{sec:pre}
All surfaces in the paper are supposed to be connected and orientable.

We consider the Poincar\'{e} disk model of $\htwo$,
\[
\htwo=\{(x,y)\in\R^2\ |\ x^2+y^2<1\},
\]
with the hyperbolic metric
$g_{-1}=\frac{4}{(1-x^2-y^2)^2}(dx^2+dy^2)$, and denote by $t$ the
coordinate in~$\R$. Consider in $\htwo\times\R$ the usual
product metric
\[
ds^2=\frac{4}{(1-x^2-y^2)^2}(dx^2+dy^2)+dt^2.
\]

\subsection{Minimal graphs in $\htwo\times\R$}\label{subsec}
Let $\Omega \subset {\mathbb H}^2 $ be an open domain and $u:\Omega
\to \R$ a smooth function.  The (vertical) graph of $u$ is minimal in
$\htwor$ if
\begin{equation}
\label{eq.min.surf}
{\rm div}\left( \frac {\nabla u}
{\sqrt{1+|\nabla u|^2}} \right)=0,
\end{equation}  
where all terms are calculated in the metric of $\htwo
$.

In~\cite{NR}, B. Nelli and H. Rosenberg proved a Jenkins-Serrin type
theorem for simply connected bounded convex domains in ${\mathbb H}^2
\times \R$: Let $\Omega\subset\htwo$ be a simply connected bounded
convex domain whose boundary consists of a finite number of geodesic
arcs $A_1,\ldots,A_n$, $B_1,\ldots, B_m$ and a finite number of convex
arcs $C_1,\ldots,C_p$ (convex with respect to $\Omega$), together with
their endpoints, such that no two $A_i$ edges and no two $B_i$ edges
have a common endpoint. They gave necessary and sufficient conditions
(in terms of the lengths of the boundary arcs of $\Omega$ and of the
perimeter of inscribed polygons in $\Omega$ whose vertices are among
the vertices of $\Omega$) for the existence and uniqueness (up to an
additive constant, in the case the family of $C_i$ arcs is empty) of a
solution $u$ for the minimal graph equation~\eqref{eq.min.surf} such
that
\[
u_{|A_i}=+\infty,\quad  u_{|B_i}=-\infty, \quad\mbox{and}\quad
u_{|C_i}=f_i,
\]
for arbitrary continuous functions $f_i$.

P. Collin and H. Rosenberg~\cite{CR} solved the Jenkins-Serrin problem
for unbounded simply-connected domains bounded by a finite number of
complete geodesic arcs and a finite number of complete convex arcs,
together with their endpoints at $\partial_\infty\htwo$, with the
additional assumption that two consecutive boundary edges of $\Omega$
are asymptotic at their common endpoint at $\partial_\infty\htwo$.  A
general Jenkins-Serrin problem in $\htwo\times\R$ was solved by the
second author together with L. Mazet and H. Rosenberg in~\cite{MRR}.

In this work we will consider the particular case where $\Omega$ is a
convex polygonal domain with $2k$ geodesic edges
$A_1,B_1,\ldots,A_k,B_k$ (cyclically ordered) of the same length
$\ell\in(0,+\infty]$. When $\ell=+\infty$, $\Omega$ is assumed to be
either an ideal or a semi-ideal polygonal domain, see
Definition~\ref{def:ideal} below.
We will state the Jenkins-Serrin theorem for such domain
$\Omega$. Before, we fix some notation.

\begin{definition}\label{def:ideal}
  {\rm Let $\Omega$ be a polygonal domain (i.e. a domain whose edges
    are geodesic arcs). We call {\it ideal vertices} of $\Omega$ to
    those of its vertices that are at $\partial_\infty\htwo$. We say
    that $\Omega$ is {\it ideal} when all its vertices are at
    $\partial_\infty\htwo$.  We say that $\Omega$ is {\it semi-ideal}
    when it has an even number of vertices $p_1,\ldots,p_{2k}$
    (cyclically ordered), such that the odd vertices $p_{2i-1}$ are in
    $\htwo$ and the even vertices $p_{2i}$ are at infinity
    $\partial_\infty\htwo$ (or viceversa).  }
\end{definition}

For each ideal vertex $p_i$ of $\Omega$ (if it exists), we consider a
horocycle $H_i$ at $p_i$.  Assume $H_i$ is small enough so that it
only intersects $\partial\Omega$ at the boundary edges having $p_i$ as
an endpoint, and so that $H_i\cap H_j=\emptyset$, for every $i\neq j$.
Given a polygonal domain $\mathcal{P}$ inscribed in $\Omega$ (i.e. a
polygonal domain $\mathcal{P}\subset\Omega$ whose vertices are drawn
from the set of endpoints of the $A_i,B_i$ edges, possibly at
infinity), we denote by $\Gamma(\mathcal{P})$ the part of $\partial
\mathcal{P}$ outside the horocycles (observe that
$\Gamma(\mathcal{P})=\partial \mathcal{P}$ in the case $\Omega$ has no
ideal vertices). Also let us call
\[
\a(\mathcal{P})=\sum_i\left|A_i\cap\Gamma(\mathcal{P})\right|
\qquad {\rm and}\qquad
\b(\mathcal{P})=\sum_i\left|B_i\cap\Gamma(\mathcal{P})\right|
,
\]
where $|\cdot|=\mbox{length}_\htwo(\cdot)$. 

\begin{definition}
  \label{Def:JS}
  {\rm Let $\Omega$ be a convex polygonal domain as above. We say that
    $\Omega$ is a {\it Jenkins-Serrin domain} when the following two
    additional conditions hold for some choice of horocycles at its
    ideal vertices:
    \begin{itemize}
    \item[(i)] $\a(\Omega)=\b(\Omega)$.
    \item[(ii)]
      $2\a(\mathcal{P})<\left|\Gamma(\mathcal{P})\right|$
      and
      $2\b(\mathcal{P})<\left|\Gamma(\mathcal{P})\right|$
      for every polygonal domain $\mathcal{P}$ inscribed in~$\Omega$,
      $\mathcal{P}\neq\Omega$.
  \end{itemize}
}
\end{definition}
Remark that condition (i) in the above definition does not depend on
the choice of horocycles $H_i$; and if the inequalities of condition
(ii) are satisfied for some choice of horocycles, then they continue
to hold for ``smaller'' horocycles.

\begin{theorem} \label{th:JS} Let $\Omega$ be a convex polygonal
  domain with $2k$ edges $A_1,B_1,\ldots,A_k,B_k$ (cyclically ordered)
  of the same length $\ell\in(0,+\infty]$.  There exists a solution
  $u$ for the minimal graph equation~\eqref{eq.min.surf} such that
  \[
  u_{|A_i}=+\infty\quad\mbox{and}\quad u_{|B_i}=-\infty,
  \]
  if, and only if, $\Omega$ is a Jenkins-Serrin domain. Moreover, if
  it exists, then it is unique up to an additive constant.
\end{theorem}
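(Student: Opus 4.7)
\emph{Plan.} I would prove Theorem~\ref{th:JS} by combining an elementary flux argument for the necessity of conditions~(i)--(ii) with the existence and uniqueness results already available in the literature, splitting the sufficiency into three cases according to whether the common edge length $\ell$ is finite and whether $\Omega$ is ideal, semi-ideal, or bounded.

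\emph{Necessity.} Assuming that a solution $u$ of~\eqref{eq.min.surf} exists, I would work with the vector field
\[
X_u=\frac{\nabla u}{\sqrt{1+|\nabla u|^2}},
\]
which satisfies $|X_u|<1$ pointwise and $\mathrm{div}(X_u)=0$. For each polygonal domain $\mathcal{P}$ inscribed in $\Omega$ I would apply the divergence theorem to $\mathcal{P}$ truncated by small horocycles at its ideal vertices and let the horocycles shrink to those vertices. On the pieces of any $A_i$-edge the flux of $X_u$ converges to $+|A_i\cap\Gamma(\mathcal{P})|$ (since $u\to+\infty$ there), on pieces of $B_i$-edges to $-|B_i\cap\Gamma(\mathcal{P})|$, and on any geodesic sub-arc interior to $\Omega$ it has absolute value strictly smaller than the hyperbolic length of the arc because $|X_u|<1$. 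Taking $\mathcal{P}=\Omega$ yields~(i); combining the resulting identity with its analogue for $\mathcal{P}\subsetneq\Omega$ produces the two strict inequalities in~(ii).

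\emph{Sufficiency, uniqueness and main obstacle.} For the sufficiency I would split into three cases. When $\ell<+\infty$, $\Omega$ is a bounded convex polygonal domain; here condition~(i) is automatic, since $\alpha(\Omega)=k\ell=\beta(\Omega)$, and existence and uniqueness up to an additive constant are provided directly by Nelli and Rosenberg's theorem~\cite{NR}. When $\ell=+\infty$ and every vertex of $\Omega$ lies on $\partial_\infty\htwo$, $\Omega$ is ideal and the statement reduces to Collin and Rosenberg's theorem~\cite{CR}. In the remaining semi-ideal case, the general Jenkins-Serrin theorem of Mazet, Rodr\'\i guez and Rosenberg~\cite{MRR} applies to $\Omega$. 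Uniqueness follows from the same flux identity applied to two solutions $u_1,u_2$: for a suitable constant $c$, the set $\{u_1>u_2+c\}$ is open with $u_1=u_2+c$ on its boundary, and the flux of $X_{u_1}-X_{u_2}$ across this boundary must vanish, forcing the set to be empty; exchanging the roles of $u_1,u_2$ then shows $u_1-u_2$ is constant. The hardest step is of course the existence in the presence of ideal vertices, which is the content of each of the references above: one truncates $\Omega$ along shrinking horocycles, replaces the $\pm\infty$ boundary data on $A_i,B_i$ by bounded $\pm n$, solves the resulting bounded Dirichlet problems, and passes to a double limit. Condition~(ii) plays its essential role precisely here, in producing translated Scherk-type barriers that prevent the approximating graphs from diverging on compact subsets of $\Omega$, so that a subsequential limit exists and realizes the prescribed $\pm\infty$ boundary values.
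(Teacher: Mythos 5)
Your proposal is correct and matches the paper's treatment: the paper gives no independent proof of Theorem~\ref{th:JS}, presenting it as the specialization to equal-length edges of the Jenkins--Serrin theorems of Nelli--Rosenberg~\cite{NR} (bounded case), Collin--Rosenberg~\cite{CR} (ideal case) and Mazet--Rodr\'\i guez--Rosenberg~\cite{MRR} (general case), which is exactly where you send the existence step. The flux computation you sketch for necessity and the comparison argument for uniqueness are the standard ones underlying those references, so nothing is missing.
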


\begin{figure}
  \begin{center}
    \epsfysize=6.5cm \epsffile{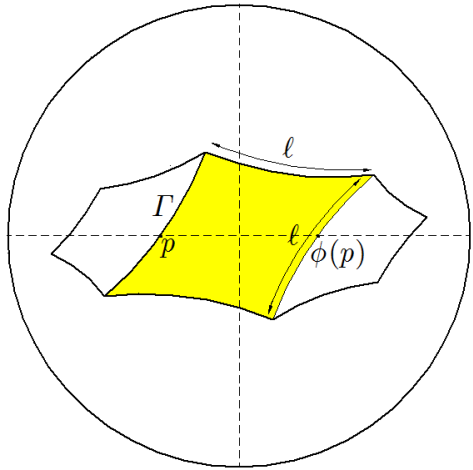}
  \end{center}
  \caption{Consider the translation $\phi$ in the direction of the
    $x$-axis such that the edges of the geodesic square $\mathcal{D}$
    determined by $\Gamma$ and $\phi(\Gamma)$ (the shadowed region)
    have length $\ell$. Translating twice $\mathcal{D}$ by $\phi$ we
    get a polygonal domain of eight edges of length $\ell$ which is
    not a Jenkins-Serrin domain.}
  \label{Fig1}
\end{figure}

\begin{remark}
  In $\R^3$, the only convex polygonal domains with $2k$ edges of the
  same length $\ell\in(0,+\infty)$ which are not Jenkins-Serrin
  domains are parallelograms bounded by two sides of length $\ell$ and
  two sides of length $(k-1)\ell$, with $k\geq 3$
  (see~\cite[Proposition 1.3]{MRT}). In $\htwo\times\R$, it is not so
  restrictive. For instance:
  \begin{itemize}
  \item Let $\G$ be a geodesic arc of length $\ell$, and let $p$ be
    its middle point. Consider a geodesic $\gamma$ passing through
    $p$, and a hyperbolic translation~$\phi$ along~$\g$ such that the
    distance from the endpoints of~$\G$ and $\phi(\G)$ is~$\ell$. Call
    $\mathcal{D}$ the polygonal domain of four edges determined by
    $\G,\phi(\G)$. The convex polygonal domain obtained from
    $\mathcal{D}$ by translating it $k$ times by
    $\phi$ (see Figure~\ref{Fig1}).
    is a polygonal domain of $4+2k$ edges of length $\ell$ which is
    not a Jenkins-Serrin domain.
  \item It can be also considered a convex polygonal domain
    $\mathcal{D}$ of $2n$ edges of length $\ell$ such that the
    interior angles at their vertices are smaller than or equal to
    $\pi/2$.  By reflecting $k$ times $\mathcal{D}$ about two opposite
    edges, we obtain a convex polygonal domain with $2n+2(n-1)k$ edges
    of length $\ell$ which is not a Jenkins-Serrin domain (see
    Figure~\ref{Fig2}).
  \end{itemize}
\end{remark}

\begin{figure}
  \begin{center}
    \epsfysize=6.5cm \epsffile{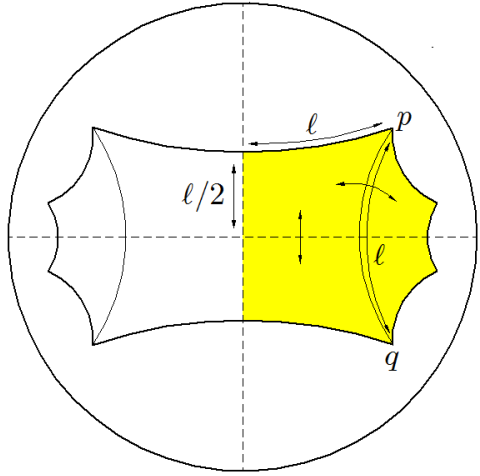}
  \end{center}
  \caption{The shadowed region $\mathcal{D}$ is a geodesic hexagon
    whose edges have length $\ell$ and their interior angles are
    $\pi/2$ up to at two opposite vertices $p,q$ where the interior
    angles are strictly smaller than $\pi/2$. By reflecting
    $\mathcal{D}$ with respect to one of the edges who has not $p$ nor
    $q$ as an endpoint, we get a polygonal domain of ten edges of
    length $\ell$ which is not a Jenkins-Serrin domain.}
  \label{Fig2}
\end{figure}

\subsection{Conjugate surfaces in $\htwo\times\R$}
\label{conjugate.h2}
In this subsection we will recall how to obtain minimal surfaces in
$\htwo\times\R$ by conjugation from other known minimal examples.  For
more details see B. Daniel~\cite{D}.

Let $\Sigma$ be a simply connected Riemann surface and $J$ be the
rotation of angle~$\frac{\pi}{2}$ on~$T\Sigma$.  Denote by
$\langle\cdot,\cdot\rangle$ the Riemannian metric on $\Sigma$. Given a
conformal minimal immersion $X:\Sigma\to\htwo\times\R$, let us call:
\begin{itemize}
\item $S$ the symmetric operator on $\Sigma$ induced by the shape
  operator of $X(\Sigma)$;
\item $T$ the vector field such that $dX(T)$ is the projection of
  $\frac{\partial}{\partial t}$ onto $T(X(\Sigma))$;
\item $N$ the induced unit normal field on $X(\Sigma)$;
\item $\nu=\langle N,\frac{\partial}{\partial t} \rangle$ the angle
  function (in particular, $\|T\|^2+\nu^2=1$).
\end{itemize}

\begin{theorem}[{\cite[Theorem 4.2]{D}}]\label{th:conjugate}
  Let $X:\Sigma\to\htwo\times\R$ be a conformal minimal immersion and
  $z_0 \in \Sigma$. There exists a unique conformal minimal immersion
  $X^*:\Sigma\to\htwo\times\R$ such that:
\begin{enumerate}
\item $X^*(z_0)=X(z_0)$ and $(dX^*)_{z_0}=(dX)_{z_0}$;
\item the metrics induced on $\Sigma$ by $X$ and $X^*$ are the same;
\item the symmetric operator on $\Sigma$ induced by the shape operator
  of $X^*(\Sigma)$ is $S^*=JS$;
\item $\frac{\partial}{\partial t}=dX^*(T^*)+\nu N^*$, where $T^*=JT$
  and $N^*$ is the unit normal vector to $X^*.$
\end{enumerate}
\end{theorem}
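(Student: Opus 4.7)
The plan is to reduce the statement to the fundamental theorem of surface theory in $\htwo\times\R$ (due to Daniel in earlier work), which asserts that an abstract Riemannian surface $(\Sigma,g)$ equipped with a symmetric operator $S$, a tangent vector field $T$ and a function $\nu$ with $\|T\|^2+\nu^2=1$, and satisfying the Gauss, Codazzi and the two product-structure compatibility equations listed below, integrates to a unique isometric immersion into $\htwo\times\R$ realizing this data once the initial position and frame at a point $z_0$ are prescribed. It is therefore enough to put $g^*=g$, $\nu^*=\nu$, $T^*=JT$ and $S^*=JS$ on $\Sigma$ and check that the conjugate data satisfy the same four equations; the uniqueness part of the fundamental theorem then yields uniqueness of $X^*$, while properties (1)--(4) follow by construction.

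The four equations, with $c=-1$ the sectional curvature of $\htwo$, read: Gauss, $K=\det S+c\nu^2$; Codazzi, $(\nabla_X S)Y-(\nabla_Y S)X=c\nu(\langle Y,T\rangle X-\langle X,T\rangle Y)$; and the product-structure identities $\nabla_X T=\nu\,SX$ and $d\nu(X)=-\langle SX,T\rangle$. I would dispose first of the three easy ones. Gauss is preserved because $J$ has determinant $+1$ and hence $\det(JS)=\det S$. The $\nu$-equation is preserved because $J$ is a pointwise isometry, so $\langle JSX,JT\rangle=\langle SX,T\rangle$. The $T$-equation is preserved because $J$ is parallel (it is the rotation by $\pi/2$ induced by the complex structure of the conformal surface), which gives $\nabla_X(JT)=J\nabla_XT=\nu(JS)X$.

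The only real work is the Codazzi equation. Applying $J$ to the original Codazzi identity and again using $\nabla J=0$, one obtains
\[
(\nabla_X(JS))Y-(\nabla_Y(JS))X=c\nu\bigl(\langle Y,T\rangle JX-\langle X,T\rangle JY\bigr),
\]
so what remains is the purely algebraic identity
\[
\langle Y,T\rangle JX-\langle X,T\rangle JY=\langle Y,JT\rangle X-\langle X,JT\rangle Y
\]
on any oriented inner-product plane. Expanding both sides in an orthonormal basis $\{e_1,Je_1\}$ shows that each equals $\det(X,Y)\,T$, where $\det(X,Y)$ is the coordinate determinant; this is the one genuine computation of the proof, and the main obstacle lies here (together with correctly setting up the fundamental theorem, which already encodes the delicate interplay between the geometric data coming from the product structure).

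Once the conjugate data are shown to satisfy all four compatibility equations, the fundamental theorem produces a unique isometric immersion $X^*:\Sigma\to\htwo\times\R$ realizing them with the prescribed initial conditions, which gives (1); item (2) is built in, and (3)--(4) are exactly the definitions of $S^*$, $T^*$ and $\nu^*=\nu$. Minimality of $X^*$ follows because in two dimensions the symmetric operator $S$ is traceless (by minimality of $X$) and a $2\times 2$ check shows that $JS$ is then also traceless, so the mean curvature of $X^*$ vanishes.
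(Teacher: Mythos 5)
The paper does not prove this statement at all: it is imported verbatim from B.~Daniel's work (the citation \cite[Theorem 4.2]{D} in the theorem header), so there is no in-paper argument to compare against. Your proposal is, in substance, Daniel's own proof of that cited theorem: reduce to the fundamental theorem for surfaces in $\htwo\times\R$ and verify that the rotated data $(g, JS, JT, \nu)$ satisfies the Gauss, Codazzi and product-structure compatibility equations. Your verification is correct, including the key algebraic identity $\langle Y,T\rangle JX-\langle X,T\rangle JY=\langle Y,JT\rangle X-\langle X,JT\rangle Y$ (both sides equal $\det(X,Y)\,T$ in an oriented orthonormal basis), and you rightly note the role of minimality, which is what makes $JS$ symmetric and traceless so that it is an admissible shape operator; the only point worth adding is that $\|JT\|^2+\nu^2=1$ is also needed as a hypothesis of the fundamental theorem, though it is immediate since $J$ is an isometry.
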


\begin{definition}
  {\rm The immersion $X^*$ obtained in Theorem~\ref{th:conjugate} is
    usually called the {\it conjugate immersion} of~$X$, and
    $X^*(\Sigma)$ is the {\it conjugate surface} of $X(\Sigma)$. In
    fact, we will not assume condition {\it (1)} in
    Theorem~\ref{th:conjugate} for the definition of conjugate
    surface; i.e. we will consider the conjugate surface $X^*(\Sigma)$
    well-defined up to isometries of $\htwo\times\R$ preserving the
    orientation of both $\htwo$ and $\R$.}
\end{definition}

Let $X=(\varphi,h):\Sigma\to\htwo\times\R$ be a conformal minimal
immersion. Since $X$ is minimal, the height function $h$ is a real
harmonic function, and that $\varphi$ is a harmonic map to $\htwo$. In
particular, we can define the Hopf differential of $\varphi$, given by
$Q_\varphi:=4 \left\|\frac{\partial \varphi}{\partial
    z}\right\|^2\,dz^2$.  Since $X$ is conformal, $Q_\varphi:=-4
\left(\frac{\partial h}{\partial z}\right)^2\,dz^2$, where $z=u+iv$ is
a local coordinate on $\Sigma$. 

\begin{proposition}[{\cite[Proposition 4.6]{D}}]\label{prop:conjugate}
  Let $X=(\varphi,h):\Sigma\to\htwo\times\R$ be a conformal minimal
  immersion, and denote by
  $X^*=(\varphi^*,h^*):\Sigma\to\htwo\times\R$ its conjugate
  immersion. Then, $h^*$ is the (real) harmonic conjugate of $h$ and
  the Hopf differential of $\varphi^*$ is
  ${Q_{\varphi^*}=-Q_\varphi}$.
\end{proposition}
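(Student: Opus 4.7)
The plan is to read off both conclusions directly from the characterization of the conjugate immersion given in Theorem~\ref{th:conjugate}, essentially translating the geometric identity $T^{*}=JT$ into the Cauchy--Riemann equations for the pair $(h,h^{*})$.

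First I would identify the vector fields $T$ and $T^{*}$ as intrinsic gradients. Because $dX(T)$ is the projection of $\partial/\partial t$ onto $T(X(\Sigma))$, for every tangent vector $v\in T\Sigma$ one has
\[
dh(v)=\langle \tfrac{\partial}{\partial t},dX(v)\rangle=\langle dX(T),dX(v)\rangle=\langle T,v\rangle,
\]
so $T=\nabla h$ with respect to the metric induced by $X$. The same argument applied to $X^{*}$ gives $T^{*}=\nabla^{*}h^{*}$, where $\nabla^{*}$ is the gradient for the metric induced by $X^{*}$. By condition~(2) of Theorem~\ref{th:conjugate} both induced metrics coincide, so $\nabla^{*}=\nabla$, and condition~(4) then reads
\[
\nabla h^{*}=J\nabla h.
\]

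Next I would pass to a local conformal coordinate $z=u+iv$ on $\Sigma$. Writing the induced metric as $\lambda^{2}(du^{2}+dv^{2})$ and using $J\partial_{u}=\partial_{v}$, $J\partial_{v}=-\partial_{u}$, the identity $\nabla h^{*}=J\nabla h$ becomes
\[
h^{*}_{u}=-h_{v},\qquad h^{*}_{v}=h_{u},
\]
which are exactly the Cauchy--Riemann equations for $h+ih^{*}$. Since $\Sigma$ is simply connected and $h$ is harmonic, this proves that $h^{*}$ is the harmonic conjugate of $h$.

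For the Hopf differential, I would start from the conformality relation $Q_{\varphi}=-4(h_{z})^{2}dz^{2}$, which also applies to $X^{*}$ giving $Q_{\varphi^{*}}=-4(h^{*}_{z})^{2}dz^{2}$. The Cauchy--Riemann equations above yield $h^{*}_{z}=\tfrac12(h^{*}_{u}-ih^{*}_{v})=\tfrac12(-h_{v}-ih_{u})=-i\,h_{z}$, so $(h^{*}_{z})^{2}=-(h_{z})^{2}$ and consequently $Q_{\varphi^{*}}=-Q_{\varphi}$.

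The only real subtlety is the first step: making sure the characterization $T=\nabla h$ transfers cleanly to the conjugate surface despite $X^{*}$ being defined only up to the freedom in Theorem~\ref{th:conjugate}. This is handled by clause~(2), which guarantees the induced metric is preserved, so the same formula $T^{*}=\nabla h^{*}$ holds with the common metric; after that, the rest is bookkeeping in conformal coordinates.
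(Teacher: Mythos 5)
Your proof is correct: the identification $T=\nabla h$ (and $T^*=\nabla h^*$ via the common induced metric), combined with $T^*=JT$, yields the Cauchy--Riemann equations for $h+ih^*$, and the relation $h^*_z=-ih_z$ then gives $Q_{\varphi^*}=-Q_\varphi$ from the conformality identity $Q_\varphi=-4(h_z)^2\,dz^2$. The paper itself offers no proof --- it simply cites Daniel's Proposition~4.6 --- and your derivation from Theorem~\ref{th:conjugate} is exactly the standard argument behind that result; the only point worth flagging is that the sign convention ``$h^*$ is \emph{the} harmonic conjugate of $h$'' (rather than its negative) depends on taking $J$ compatible with the orientation defining the conformal coordinate $z$, which you implicitly and correctly assume.
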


We remark that L. Hauswirth, R. Sa Earp and 
E. Toubiana~\cite{HST} gave a
different definition of conjugate immersion: Given a conformal minimal
immersion ${X=(\varphi,h):\Sigma\to\htwo\times\R}$, they proved there
exists a conformal minimal immersion
${X^*=(\varphi^*,h^*):\Sigma\to\htwo\times\R}$ isometric to $X$ such
that ${Q_{\varphi^*}=-Q_\varphi}$. They call conjugate immersion of
$X$ to such conformal minimal immersion $X^*$. Such conjugate
immersion is well-defined up to an isometry of $\htwo\times\R$ (not
necessarily preserving the orientation of both $\htwo$ and $\R$), by
Theorem~\ref{th:isometric} below. We will use in this paper Daniel's
definition of conjugate immersion.

\begin{theorem}[{\cite[Theorem 6]{HST}}]\label{th:isometric}
  Two isometric conformal minimal immersions
  $X_1=(\varphi_1,h_1),X_2=(\varphi_2,h_2)$ of $\Sigma$ in $\htwo\times\R$
  having the same Hopf differential $Q_{\varphi_1}=Q_{\varphi_2}$, are
  equal up to an isometry of $\htwo\times\R$.
\end{theorem}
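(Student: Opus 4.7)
My plan is to reduce the claim to the classical rigidity of isometric maps into the hyperbolic plane, in two steps: first align the vertical parts (heights) of the two immersions by an ambient isometry, and then align their horizontal projections to $\htwo$.

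\textbf{Step 1 (aligning the heights).} By conformality of the $X_i$, the formula recalled just before the statement gives $Q_{\varphi_i}=-4(\partial h_i/\partial z)^2\,dz^2$, so the hypothesis $Q_{\varphi_1}=Q_{\varphi_2}$ reduces to $(\partial h_1/\partial z)^2=(\partial h_2/\partial z)^2$ on $\Sigma$. Since each $h_i$ is real harmonic, $\partial h_i/\partial z$ is holomorphic; because $\Sigma$ is connected, the identity principle forces $\partial h_1/\partial z=\epsilon\,\partial h_2/\partial z$ for a global sign $\epsilon\in\{\pm 1\}$. Integrating yields $h_1=\epsilon h_2+c$ for some real constant $c$. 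Composing $X_2$ with the ambient isometry $(p,t)\mapsto(p,\epsilon t+c)$ of $\htwo\times\R$, we may assume $h_1=h_2=:h$.

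\textbf{Step 2 (aligning the horizontal projections).} Subtracting $dh\otimes dh$ from the (common) induced metric of $X_1$ and $X_2$ gives $\varphi_1^{*}g_{-1}=\varphi_2^{*}g_{-1}$ on $\Sigma$. Each $\varphi_i$ is a harmonic map into $\htwo$ (since $X_i$ is conformal and minimal), hence real-analytic. Pick a point $z_0\in\Sigma$ at which $d\varphi_1$, and therefore $d\varphi_2$, has maximal rank two, and a neighborhood $V\ni z_0$ on which $\varphi_1$ is a diffeomorphism onto its image. Then $F_0:=\varphi_2\circ(\varphi_1|_V)^{-1}$ pulls $g_{-1}$ back to $g_{-1}$ and is therefore a local isometry of $\htwo$. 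By rigidity of the complete simply connected space form $\htwo$, $F_0$ extends uniquely to a global isometry $F$ of $\htwo$ with $\varphi_2=F\circ\varphi_1$ on $V$; real-analyticity and connectedness of $\Sigma$ then propagate this identity to the whole surface.

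Combining the two steps, $X_1=\widetilde F\circ X_2$ with $\widetilde F(p,t)=(F(p),\epsilon t+c)$ an isometry of $\htwo\times\R$, as required.

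The main obstacle I anticipate lies in guaranteeing the existence of the point $z_0$ of Step~2: if both $d\varphi_i$ are everywhere rank-deficient, then each $X_i$ is a vertical cylinder over a curve in $\htwo$, and minimality forces that curve to be a geodesic, so both $X_i$ parametrize vertical geodesic planes. This degenerate case must be treated separately, but is immediate from the transitivity of the isometry group of $\htwo\times\R$ on oriented vertical geodesic planes.
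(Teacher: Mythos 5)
This statement is not proved in the paper at all: it is quoted verbatim as Theorem~6 of Hauswirth--Sa Earp--Toubiana \cite{HST}, so there is no internal argument to compare yours against. Judged on its own, your proof is essentially correct and pleasantly elementary: Step~1 correctly exploits the identity $Q_{\varphi_i}=-4(\partial h_i/\partial z)^2\,dz^2$ together with the identity principle for the holomorphic functions $\partial h_i/\partial z$ to normalize $h_1=h_2$ by a vertical isometry, and Step~2 correctly reduces the horizontal part to the classical fact that a local isometry between connected open subsets of $\htwo$ extends to a global isometry, propagated over all of $\Sigma$ by real-analyticity of the harmonic maps $\varphi_i$. (Two small points worth making explicit: the rank of $d\varphi_2$ at $z_0$ equals that of $d\varphi_1$ because $\varphi_1^*g_{-1}=\varphi_2^*g_{-1}$ and $g_{-1}$ is positive definite; and $d\varphi_i$ can never vanish, since otherwise the induced metric would degenerate to $dh\otimes dh$.)

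The only place where the argument is genuinely too quick is the degenerate case at the end. Transitivity of $\mathrm{Isom}(\htwo\times\R)$ on oriented vertical geodesic planes only lets you superpose the two \emph{images}; the theorem asserts that the two \emph{immersions} agree as maps after composing with an ambient isometry. You still have the data needed to finish: after Step~1 you have $h_1=h_2$, and writing $\varphi_i=c_i\circ f_i$ with $c_i$ a unit-speed parametrization of the geodesic $\gamma_i$ and $f_i=s_i\circ\varphi_i:\Sigma\to\R$ the arclength function, the equality $\varphi_1^*g_{-1}=\varphi_2^*g_{-1}$ gives $df_1\otimes df_1=df_2\otimes df_2$ with $df_i$ nowhere zero, hence $f_1=\pm f_2+\mathrm{const}$ on the connected surface $\Sigma$; the isometry of $\htwo$ carrying $c_2(s)$ to $c_1(\pm s+\mathrm{const})$ then matches the parametrizations, not just the planes. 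With that sentence added, the proof is complete, and it is arguably more self-contained than relying on the harmonic-map machinery of the cited source.
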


Some geometric properties of conjugate surfaces in $\htwo\times\R$ are
also discussed in~\cite{D}.  Similarly as in $\R^3$ (see
Karcher~\cite{K1,K2,K3}), we get the following lemma.

\begin{lemma}\label{lem:Schwarz}
  The conjugation exchanges the following Schwarz reflections:
\begin{itemize}
\item The symmetry with respect to a vertical plane containing a
  curvature line becomes the rotation with respect to a horizontal
  geodesic of $\htwo$, and viceversa.
\item The symmetry with respect to a horizontal plane containing a
  curvature line becomes the rotation with respect to a vertical
  straight line, and viceversa.
\end{itemize}
\end{lemma}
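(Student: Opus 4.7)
The plan is to adapt Karcher's classical argument in $\mathbb{R}^3$ (see \cite{K1,K2,K3}) to the ambient $\htwo\times\R$, combining the Schwarz reflection principle with the infinitesimal relations between $X$ and $X^{*}$ provided by Theorem~\ref{th:conjugate}. The guiding idea is that a curvature line of $X(\Sigma)$ lying orthogonally in a totally geodesic plane $P$ of $\htwo\times\R$ corresponds, under conjugation, to a curve on $X^{*}(\Sigma)$ that is an ambient geodesic of $\htwo\times\R$; the Schwarz reflection principle then forces $X^{*}(\Sigma)$ to be invariant under the $\pi$-rotation about this geodesic.

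First I would record two geometric facts. The totally geodesic planes of $\htwo\times\R$ are the vertical geodesic planes $\gamma\times\R$ and the horizontal slices $\htwo\times\{t_{0}\}$. If $X(\Sigma)$ meets such a plane $P$ orthogonally along $c=X\circ\alpha$, then along $c$ one has $N\in TP$ and (since $P$ is totally geodesic) $c''\in TP=\mathrm{span}(N,c')$; in arc-length parametrization this forces $c''\parallel N$, so $c$ is simultaneously a curvature line and a geodesic of $X(\Sigma)$. Conversely, a curve $c^{*}=X^{*}\circ\beta$ is an ambient geodesic of $\htwo\times\R$ iff $\beta$ is a geodesic on $(\Sigma,g)$ and asymptotic on $X^{*}(\Sigma)$. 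Since the induced metrics of $X$ and $X^{*}$ coincide (Theorem~\ref{th:conjugate}(2)), the geodesic condition on $\Sigma$ is common to both immersions; and using $S^{*}=JS$ together with $\langle Jv,w\rangle=-\langle v,Jw\rangle$,
\[
\mathrm{II}^{*}(\alpha',\alpha')=\langle JS\alpha',\alpha'\rangle=-\langle S\alpha',J\alpha'\rangle,
\]
which vanishes whenever $\alpha$ is principal on $X(\Sigma)$. Hence if $c=X\circ\alpha$ is a curvature line of $X(\Sigma)$ lying orthogonally in $P$, then $c^{*}:=X^{*}\circ\alpha$ is an ambient geodesic of $\htwo\times\R$.

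To identify which kind of ambient geodesic $c^{*}$ is, I would compute the vertical component of $dX^{*}(\alpha')$, which by $\partial_{t}=dX^{*}(T^{*})+\nu N^{*}$ equals
\[
\langle dX^{*}(\alpha'),\partial_{t}\rangle=\langle\alpha',T^{*}\rangle=\langle\alpha',JT\rangle.
\]
In the vertical case $P=\gamma\times\R$, write the unit normal along $c$ as $N=a\gamma'+b\partial_{t}$ with $a^{2}+b^{2}=1$. The identity $dX(T)=\partial_{t}-\nu N=a(a\partial_{t}-b\gamma')=a\,c'$ gives $T=a\alpha'$ in $T\Sigma$, so $\langle\alpha',JT\rangle=a\langle\alpha',J\alpha'\rangle=0$; thus $dX^{*}(\alpha')$ is horizontal and $c^{*}$ is a horizontal geodesic of $\htwo\times\R$. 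In the horizontal case $P=\htwo\times\{t_{0}\}$, one has $\nu=0$, $\|T\|=1$, $dX(T)=\partial_{t}$, and $c'\perp\partial_{t}$ forces $\alpha'=\pm JT$; hence $\langle\alpha',JT\rangle=\pm 1$ and $dX^{*}(\alpha')=\pm\partial_{t}$, so $c^{*}$ is a vertical straight line of $\htwo\times\R$.

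Finally, the Schwarz reflection principle in $\htwo\times\R$ (any minimal surface containing an ambient geodesic that is the axis of a $\pi$-rotational ambient isometry is invariant under that rotation) gives the invariance of $X^{*}(\Sigma)$ under the appropriate $\pi$-rotation, establishing the two forward implications of the lemma. The converse implications follow by exchanging the roles of $X$ and $X^{*}$ and using that the conjugate is involutive up to an orientation-preserving ambient isometry (Theorem~\ref{th:isometric}). The main obstacle will be the linear-algebra identification of $\alpha'$ in terms of $T$ from the orthogonality data along $c$ (the relations $T=a\alpha'$ in the vertical case and $\alpha'=\pm JT$ in the horizontal case); once these are in place, the vertical-versus-horizontal dichotomy for $dX^{*}(\alpha')$ and the identification of the resulting ambient geodesic follow at once.
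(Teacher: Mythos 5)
The paper gives no proof of this lemma: it is stated as following ``similarly as in $\R^3$'' with references to Karcher and to Daniel's paper, so there is nothing to compare your argument against line by line. Your forward implications are essentially correct and well executed: the observation that orthogonal intersection with a totally geodesic plane makes $c$ both a surface geodesic and a curvature line, the computation $\mathrm{II}^*(\alpha',\alpha')=-\langle S\alpha',J\alpha'\rangle=0$ showing $c^*$ is an ambient geodesic, and the use of $\langle dX^*(\alpha'),\partial_t\rangle=\langle\alpha',JT\rangle$ to decide whether that geodesic is horizontal ($T=\pm a\,\alpha'$ in the vertical-plane case) or vertical ($\alpha'=\pm JT$ in the horizontal-plane case) are all sound.

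The genuine gap is in the ``viceversa'' directions, which you dispatch in one sentence by ``exchanging the roles of $X$ and $X^*$.'' That does not work logically: applying your forward implication to the pair $(X^*,X^{**}\cong X)$ yields ``if $X^*$ meets a vertical plane orthogonally along a curvature line, then $X$ contains a horizontal geodesic'' --- which is again a forward implication (now for $X^*$), not the converse you need, namely ``if $X$ contains a horizontal geodesic, then $X^*$ meets a vertical plane orthogonally along a curvature line.'' This matters because the paper uses precisely the converse directions (e.g.\ in Proposition~\ref{parallel.planes}, the vertical boundary lines of $\Sigma$ must become horizontal curvature lines of planar symmetry in $\Sigma^*$). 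The converses require a direct argument: from an ambient geodesic $c=X\circ\alpha$ one gets that $c^*$ is a surface geodesic and a curvature line, and then one must exhibit the totally geodesic plane. For a vertical line $c$ this is easy ($\nu=0$ and $T=\pm\alpha'$ force $dX^*(\alpha')$ and $N^*$ horizontal, so $c^*$ stays in a slice met orthogonally). For a horizontal geodesic $c$ one has $T\parallel J\alpha'$, hence $\partial_t\in\mathrm{span}(dX^*(\alpha'),N^*)$, and one must check that this plane field is parallel along $c^*$ (using $\nabla_{(c^*)'}(c^*)'=\kappa N^*$ and $\nabla_{(c^*)'}N^*=-\mu\,(c^*)'$) so that $c^*$ lies in a fixed vertical plane $\gamma\times\R$ met orthogonally by $X^*(\Sigma)$. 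Alternatively, one can argue globally via Theorem~\ref{th:isometric}: if $\rho$ is the $\pi$-rotation about $c$ and $X\circ\phi=\rho\circ X$, compare $X^*\circ\phi$ with $\sigma\circ X^*$ for the candidate reflection $\sigma$, checking they have the same metric and Hopf differential. Either route fills the gap, but as written your proposal does not establish the directions the paper actually relies on.
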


We will use the above correspondence to study the conjugate surface of
minimal graphs defined on convex polygonal domains of $\htwo$.  The
surface contructed in this way is a minimal graph (and consequently
embedded), as ensured by the following generalized version of Krust's
Theorem.

\begin{theorem}[{\cite[Theorem 14]{HST}}]\label{th:krust}
  Let $X(\Sigma)$ be a (vertical) minimal graph over a convex domain
  $\Omega\subset\htwo$. Then $X^*(\Sigma)$ is a (vertical) minimal
  graph.
\end{theorem}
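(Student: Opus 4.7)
The plan is to establish that $X^*$ is a vertical graph by showing the horizontal projection $\varphi^*:\Sigma\to\htwo$ of $X^*$ is first a local diffeomorphism and then globally injective, convexity of $\Omega$ being used only in the second step. For local regularity, condition~(4) in Theorem~\ref{th:conjugate} gives $\frac{\partial}{\partial t}=dX^*(T^*)+\nu N^*$, so the angle function $\nu^*=\langle N^*,\frac{\partial}{\partial t}\rangle$ of $X^*$ coincides with the angle function $\nu$ of $X$. Since $X(\Sigma)$ is a graph over $\Omega$, $\nu$ is nowhere zero on $\Sigma$, hence $\nu^*$ is nowhere zero as well; in particular $X^*(\Sigma)$ is nowhere tangent to $\frac{\partial}{\partial t}$ and $\varphi^*$ is a local diffeomorphism.

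For global injectivity, I would argue by contradiction. Suppose $\varphi^*(p)=\varphi^*(q)=:q_0$ for distinct points $p,q\in\Sigma$. By convexity of $\Omega$, the geodesic segment $\sigma\subset\htwo$ joining $\varphi(p)$ to $\varphi(q)$ is contained in $\Omega$, so it lifts through $\varphi^{-1}$ to an embedded curve $c:[0,1]\to\Sigma$ with $c(0)=p$ and $c(1)=q$. Then $\gamma:=\varphi^*\circ c$ is a closed loop in $\htwo$ based at $q_0$, and the goal is to show that such a loop cannot exist.

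The main ingredient is an infinitesimal comparison between $d\varphi$ and $d\varphi^*$ coming from the conjugate construction: $X$ and $X^*$ induce the same metric on $\Sigma$ (Theorem~\ref{th:conjugate}(2)), the harmonic maps $\varphi,\varphi^*:\Sigma\to\htwo$ carry opposite Hopf differentials (Proposition~\ref{prop:conjugate}), and $\nu^*=\nu$. In the Euclidean analogue one has $dX^*=dX\circ J$, so the horizontal velocity $\dot\gamma$ along a geodesic lift is the $\pi/2$-rotation of $\dot\sigma$, forcing $\dot\gamma$ into a fixed open half-plane of $T_{q_0}\R^2$ and contradicting closedness. The pointwise analogue in $\htwo\times\R$, expressed through $N$, $T$ and the shape operator via Theorem~\ref{th:conjugate}, combined with parallel transport of the initial tangent along $\sigma$, should again keep $\dot\gamma$ strictly on one side of the geodesic through $q_0$ perpendicular to $\sigma$ at $q_0$, giving the desired contradiction.

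The hard part is making this ``$\pi/2$-rotation'' argument precise in the curved target $\htwo$, where tangent spaces at different points cannot be naively identified; the transfer of the infinitesimal rotation from the intrinsic tangent plane of the surface to $T\htwo$ has to be controlled through the ambient-isometry formulas of Theorem~\ref{th:conjugate} together with the sign-change of the Hopf differential given by Proposition~\ref{prop:conjugate}. A complete verification along these lines is carried out in~\cite[Theorem~14]{HST}.
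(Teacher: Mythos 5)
First, note that the paper does not prove Theorem~\ref{th:krust}: it is quoted from~\cite[Theorem 14]{HST} and used as a black box, so there is no internal proof to compare yours against. Judged as a self-contained argument, your proposal has a genuine gap. The first step is fine: condition \emph{(4)} of Theorem~\ref{th:conjugate} gives $\nu^*=\nu$, and since a vertical graph has nowhere-vanishing angle function, $X^*(\Sigma)$ is nowhere vertical and $\varphi^*$ is a local diffeomorphism. But this only shows that $X^*(\Sigma)$ is \emph{locally} a graph; the entire content of a Krust-type theorem is the global injectivity of $\varphi^*$, and that is exactly the step you do not carry out. You reduce it to the claim that $\dot\gamma=\frac{d}{ds}(\varphi^*\circ c)$ stays ``strictly on one side of the geodesic through $q_0$ perpendicular to $\sigma$,'' but you never identify the quantity whose sign controls this, nor why it has a sign. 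In $\R^3$ the argument works because $\varphi^*(q)-\varphi^*(p)=\int_0^1 d\varphi^*(\dot c)\,ds$ is a genuine vector-valued integral and $d\varphi^*(\dot c)$ has positive inner product with a \emph{fixed} vector; in $\htwo$ there is no such integral of tangent vectors based at different points, and the relation $d\varphi^*=d\varphi\circ J$ is itself false for the horizontal components alone (only the full data of $X$ and $X^*$ are related, through $S^*=JS$ and $T^*=JT$). Writing that the pointwise analogue ``should again keep $\dot\gamma$ strictly on one side'' and deferring to~\cite{HST} leaves the one nontrivial assertion unproved.

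Concretely, what is missing is a monotonicity statement: one must produce a function $f$ on $\htwo$ (in~\cite{HST} this is achieved with coordinates adapted to the complete geodesic containing $\sigma$, essentially the signed arclength of the orthogonal projection onto that geodesic) and prove that $\frac{d}{ds}\,f(\varphi^*(c(s)))$ has a strict sign, by a pointwise computation expressing this derivative through $\nu$, $\|T\|$ and the metric coefficients and exploiting $\nu\neq 0$; only then does the closed loop $\gamma$ yield a contradiction. Without exhibiting $f$ and verifying that sign, your argument establishes only the easy local statement. Either carry out that computation or treat the theorem purely as a citation, as the paper does; the intermediate form you propose is not a proof.
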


\section{Saddle Towers in $\htwo\times\R$}
\label{bounded.case}
This section deals with the construction of properly embedded minimal
surfaces in $\htwo\times\R$ invariant by a vertical translation $T$,
which have total curvature $4\pi(1-k)$, genus zero and $2k$ vertical
Scherk-type ends in the quotient by $T$ (Theorem~\ref{th:main}). The
construction is similar to the one of Karcher's Saddle Towers in
$\R^3$.  We also call these new examples {\it Saddle Towers}.

Consider a Jenkins-Serrin domain $\O$ whose edges
$A_1,B_1,A_2,B_2,\ldots,A_k,B_k$ (cyclically ordered) have length
$\ell \in (0,+\infty)$.  Denote by $p_1,\ldots, p_{2k}$ the vertices
of $\Omega$, such that $p_{2i-1},p_{2i}$ are the endpoints of $A_i$
and $p_{2i},p_{2i+1}$ are the endpoints of $B_i$, for $i=1,\ldots,k$
(as usual, we consider the cyclic notation $p_{2k+1}\equiv p_1$), see
Figure~\ref{Fig3}.

\begin{figure}
  \begin{center}
    \epsfysize=8cm \epsffile{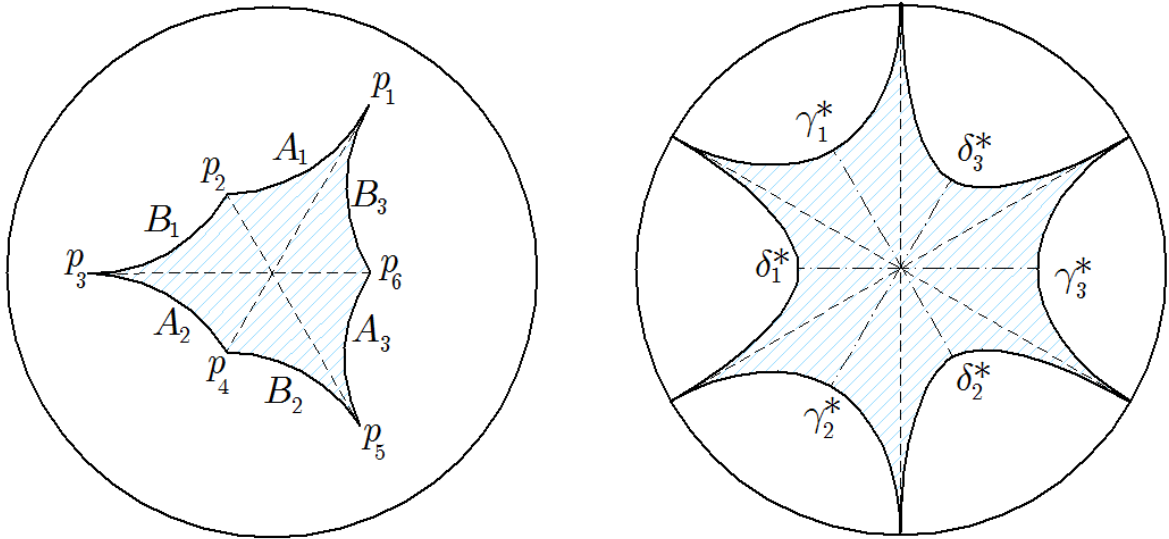}
  \end{center}
  \caption{Left: This is an example of a (symmetric) bounded
    Jenkins-Serrin domain $\Omega$ with six edges of the same
    length. Right: This picture shows the vertical projection over
    $\htwo$ of the conjugate surface (which is a graph) of the graph
    over $\Omega$ with boundary values $+\infty$ over $A_1\cup A_2\cup
    A_3$ and $-\infty$ over $B_1\cup B_2\cup B_3$.}
  \label{Fig3}
\end{figure}

By Theorem~\ref{th:JS}, there exists a solution $u:\O \to \R$ to the
minimal graph equation \eqref{eq.min.surf} on $\O$ satisfying
$u_{|A_i}=+\infty$ and $u_{|B_i}=-\infty$, for any $i=1,\ldots,k$.

The geometry of the graph surface $\Sigma$ of $u$ 
near $\partial\Omega$  is explained
in~\cite{NR}. 
When we
approach a point in $A_i$ (resp. $B_i$) 
within $\O$, the tangent plane
to $\Sigma$ becomes vertical, asymptotic to $A_i\times\R$ (resp.
$B_i\times\R$); i.e. the angle function $\nu$ goes to zero as we
approach $A_i,B_i$.  
Moreover $\Sigma$ is bounded by $2k$ vertical straight
lines passing through the vertices of $\O$.
Since $\Sigma$ is asymptotic to $A_i\times\R$ (resp. $B_i\times\R$)
over $A_i$ (resp. $B_i$), the intrinsic distance on $\Sigma$ from
$\{p_{2i-1}\}\times\R$ 
to $\{p_{2i}\}\times\R$ 
(resp. from $\{p_{2i}\}\times\R$ 
to $\{p_{2i+1}\}\times\R$) 
is $\ell$, which is never attained ($\ell$ is the asymptotic intrinsic
distance at infinity).

\begin{proposition}
  \label{parallel.planes}
  The conjugate surface $\Sigma^*$ of $\Sigma$ is a (vertical) minimal
  graph, whose boundary is of the form
  $\partial\Sigma^*=\g_1^*\cup\delta_1^*\cup\ldots\cup\g_k^*\cup\delta_k^*$,
  where
  \[
  \g_1^*,\ldots,\g_k^* \subset \{t=0\}\qquad \mbox{and}\qquad
  \delta_1^*,\ldots,\delta_k^* \subset \{t=\ell\}
  \]
  are  curvature lines of symmetry. 
  Let us call $\O^*$, $\widetilde\delta_i^*$ the respective vertical
  projection of $\Sigma^*$, $\delta_i^*$ over $\{t=0\}$. Then:
  \begin{enumerate}
  \item None of the curves $\g_i^*,\widetilde\delta_i^*$ is convex
    (with respect to $\O^*$) at any point.
  \item $\g_i^*$ and $\widetilde\delta_i^*$
    (resp. $\widetilde\delta_i^*$ and $\g_{i+1}^*$) are asymptotic 
    at their common endpoint at $\partial_\infty\htwo$.
  \item
    $\partial\O^*=\g_1^*\cup\widetilde\delta_1^*\cup\ldots\cup\g_k^*\cup\widetilde\delta_k^*$
    (cyclically ordered).
  \item $\Sigma^*-\partial\Sigma^*\subset\{0<t<\ell\}$.
  \end{enumerate}
\end{proposition}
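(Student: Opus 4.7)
The plan is to combine three ingredients: Krust's theorem (Theorem~\ref{th:krust}) to get that $\Sigma^*$ is a graph over some $\Omega^*\subset\htwo$, Lemma~\ref{lem:Schwarz} to identify the images of the vertical line components of $\partial\Sigma$ as planar curvature lines of $\Sigma^*$, and Proposition~\ref{prop:conjugate} to control heights via the harmonic conjugate. Since $\partial\Sigma=\bigcup_{j=1}^{2k}\{p_j\}\times\R$ consists of $2k$ vertical straight lines, each of which is a rotation axis for a Schwarz extension of $\Sigma$, the correspondence of Lemma~\ref{lem:Schwarz} sends it to a planar curvature line of $\Sigma^*$ contained in some horizontal symmetry plane $\{t=c_j\}$; together with Krust this produces $\partial\Sigma^*$ as a disjoint union of $2k$ horizontal curvature-line arcs.

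Next I would determine the heights $c_j$. By Proposition~\ref{prop:conjugate} the height of $\Sigma^*$ is the harmonic conjugate $h^*$ of the height $h$ of $\Sigma$. On a vertical line in $\partial\Sigma$ the tangent direction is vertical, so $dh^*=dh\circ J$ vanishes there and $h^*$ is a constant $c_j$ on each component. To compute $c_{2i}-c_{2i-1}$, I would integrate $dh^*$ along the level curve $\Sigma\cap\{t=T\}$ joining the vertical lines above $p_{2i-1}$ and $p_{2i}$ near $A_i$: along this level curve $dh=0$, so $|dh^*|=|\nabla^\Sigma h|\,ds=\sqrt{1-\nu^2}\,ds$, which tends uniformly to $1$ as $T\to+\infty$ (since the tangent plane becomes vertical and $\nu\to 0$), while the curve converges to $A_i$; hence $c_{2i}-c_{2i-1}=|A_i|=\ell$. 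The analogous argument across $B_i$ yields the opposite sign, so the $c_j$ alternate between two values differing by $\ell$, which we normalize to $0$ and $\ell$, defining the $\g_i^*$ and $\delta_i^*$.

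For item~(1), using the reflection symmetry across $\{t=0\}$ I would parametrize $\Sigma^*$ near a point of $\g_i^*$ as $X(s,t)=(\g_i^*(s)+t^2\vec F(s)+O(t^4),\,t)$ with $s$ the arc length along $\g_i^*$. The graph condition over $\Omega^*$ forces the component of $\vec F$ normal to $\g_i^*$ and pointing into $\Omega^*$ to be nonnegative, while the minimality condition applied to the principal curvatures along $\tau$ and $\partial/\partial t$ (which are principal directions, since $\g_i^*$ is a curvature line) computes this component as $-\tfrac12 k_g$, where $k_g$ is the signed geodesic curvature of $\g_i^*$ with respect to $\Omega^*$; combining the two gives $k_g\le 0$ everywhere. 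For items~(2) and~(3), the key observation is that near each asymptotic arc $A_i$ the surface $\Sigma$ is asymptotic to the vertical geodesic plane $A_i\times\R$, and by Theorem~\ref{th:conjugate} the conjugate $\Sigma^*$ is correspondingly asymptotic to the conjugate vertical plane $A_i^*\times\R$ for some horizontal geodesic $A_i^*\subset\htwo$ with two ideal endpoints at $\partial_\infty\htwo$; both $\g_i^*$ and $\widetilde\delta_i^*$ approach $A_i^*$ in their respective horizontal planes, and since both come from the $h\to+\infty$ ends of the adjacent vertical lines they approach the same ideal endpoint of $A_i^*$, proving~(2); cyclically piecing these endpoints across all $A_i$ and $B_i$ arcs gives~(3). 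Finally, for item~(4) the function $h^*$ is harmonic on the simply connected surface $\Sigma$, equals $0$ or $\ell$ on each vertical line of $\partial\Sigma$, and by the monotonic computation above takes values in $[0,\ell]$ along each asymptotic arc; the maximum principle then yields $0\le h^*\le\ell$ on $\Sigma$, with strict inequality in the interior by the strong maximum principle.

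The main obstacle I anticipate is in making the asymptotic analysis underlying items~(2) and~(3) rigorous: both the identification of $A_i^*\times\R$ as the correct asymptotic model for $\Sigma^*$ near the ideal boundary arc corresponding to $A_i$, and the conclusion that $\g_i^*$ and $\widetilde\delta_i^*$ are asymptotic at a common ideal endpoint in the horocyclic sense, require a careful passage to the limit in Daniel's correspondence, since $\Sigma$ is non-compact and the $A_i$ arcs sit at vertical infinity.
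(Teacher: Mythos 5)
Your skeleton (Theorem~\ref{th:krust} for the graph property, Lemma~\ref{lem:Schwarz} to turn the $2k$ vertical boundary lines into horizontal curvature lines of symmetry, maximum principle for item (4)) is the paper's. Two of your sub-arguments are genuinely different but workable. For the heights, you integrate $dh^*$ along level curves of $h$ and track the sign of $dh(J\tau)$ near $A_i$ versus $B_i$ to force alternation between $\{t=0\}$ and $\{t=\ell\}$; the paper instead rules out the staircase $0,\ell,2\ell,\dots$ geometrically, by noting that $N^*$ is horizontal along $\delta_1^*$ and $\Sigma^*$ lies locally below $\{t=\ell\}$ near it, so it cannot be asymptotic to $\{q_2^*\}\times(\ell,2\ell)$. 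For item (1), your Taylor expansion gives $k_g\le 0$ along $\g_i^*$, whereas the paper reflects $\Sigma^*$ across the horizontal plane of $\g_1^*$ and applies the maximum principle against the tangent vertical geodesic plane at a putative convexity point; the latter also excludes higher-order convexity points, which your second-order computation does not.

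The genuine gap is exactly where you flagged it, in items (2)--(3). The step ``$\Sigma$ asymptotic to $A_i\times\R$ implies $\Sigma^*$ asymptotic to the conjugate vertical plane $A_i^*\times\R$'' does not follow from Theorem~\ref{th:conjugate}, which controls the immersion (isometry, $S^*=JS$, $T^*=JT$) but says nothing directly about asymptotic models; moreover the asserted model is not even the right one: since $T^*=JT$ rotates the vertical direction of the flat strip $A_i\times\R$ into its width direction, the corresponding end of $\Sigma^*$ collapses onto a vertical \emph{segment} $\{q_i^*\}\times(0,\ell)$ over a single ideal point, not onto a vertical plane over a complete geodesic with two ideal endpoints. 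The paper avoids any asymptotic transfer: it uses only what Daniel's theorem rigorously gives, namely that $\Sigma^*$ is isometric to $\Sigma$ with the same angle function $\nu$. Hence the asymptotic intrinsic distance between $\g_i^*$ and $\delta_i^*$ is $\ell$ and $N^*$ is asymptotically horizontal there, which forces $\widetilde\g_i^*$ and $\widetilde\delta_i^*$ to share an endpoint at which they meet tangentially; and each component of $\delta_i^*$ minus a point has infinite length because the corresponding half of the vertical line $\delta_i$ does. Infinite length combined with item (1) (the curves are nowhere convex with respect to $\O^*$) then forces the common endpoints to lie on $\partial_\infty\htwo$, giving (2) and (3). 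You would need to replace your asymptotic-model step by an argument of this kind; as written, (2) and (3) are not established.
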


\begin{proof}
  Since $\O$ is convex, Theorem \ref{th:krust} says that the conjugate
  surface $\Sigma^*$ of $\Sigma$ is a minimal graph over a domain
  $\O^*\subset\htwo$.  By Lemma~\ref{lem:Schwarz}, we know that the
  conjugation transforms vertical straight lines into horizontal
  curvature lines of symmetry.  Then $\partial \Sigma^*$ consists of
  $2k$ horizontal symmetry curves
  $\g^*_1,\delta^*_1,\ldots,\g^*_k,\delta^*_k$. Assume those boundary
  curves are ordered so that two consecutive ones correspond by
  conjugation to vertical straight lines in $\partial\Sigma$ through
  consecutive vertices of $\O$.
  For every $i=1,\ldots,k$, let $\g_i,\delta_i\subset\partial\Sigma$
  be the straight lines which correspond by conjugation to
  $\g^*_i,\delta^*_i\subset\partial\Sigma^*$; and
  $\widetilde\g_i^*,\widetilde\delta_i^*$ be the vertical projection
  of $\g_i^*,\delta_i^*$ over $\{t=0\}\equiv\htwo$, respectively.

  Consider the surface $M$ obtained by extending $\Sigma^*$ by
  symmetry with respect to the horizontal plane containing $\g^*_1$.
  If $\widetilde\g^*_1$ is convex (with respect to $\O^*$) at some
  point, then we will obtain by the maximum principle that $M$ is
  contained in a vertical plane, a contradiction with the fact that
  $\Sigma^*$ is a graph. Similarly, we deduce that none of vertical
  projections of the curves in $\partial\Sigma^*$ has a convexity
  point. This proves {\it (1)}.

  The asymptotic intrinsic distance at infinity between
  $\g_i,\delta_i$ (resp.  $\delta_i,\g_{i+1}$) is $\ell$ since the
  surface $\Sigma$ is asymptotically vertical. By
  Theorem~\ref{th:conjugate}, $\Sigma,\Sigma^*$ are isometric and have
  the same angle function.  Thus the asymptotic intrinsic distance
  between $\g_i^*,\delta_i^*$ (resp. $\delta_i^*,\g_{i+1}^*$) is
  $\ell$, and the unit normal vector field $N^*$ to $\Sigma^*$ is
  asymptotically horizontal between $\g_i^*$ and $\delta_i^*$ (resp.
  between $\delta_i^*$ and $\g_{i+1}^*$). In particular,
  $\widetilde\delta_i^*$ shares an endpoint with $\widetilde\g_i^*$,
  where they arrive tangentially, and the other with
  $\widetilde\g_{i+1}^*$, where they are also tangent. Observe that
  this proves {\it (3)}.

  To finish {\it (2)}, it remains to prove that the endpoints of each
  $\widetilde\delta_i^*$ are at $\partial_\infty\htwo$.  Fix a point
  $p^*\in\delta_i^*$, and let $\widetilde p^*\in\widetilde\delta_i^*$
  be its vertical projection.  The point $p^*$ corresponds by
  conjugation to a point $p\in\delta_i$, which divides $\delta_i$ in
  two curves of infinite length. Since $\Sigma$ and $\Sigma^*$ are
  isometric, then each component of $\widetilde\delta_i^*-\{\widetilde
  p^*\}$ has infinite length as well, and finishes at a common
  endpoint with $\widetilde\g_{i}^*$ or $\widetilde\g_{i+1}^*$. Since
  all $\widetilde\g_{i}^*,\widetilde\delta_i^*,\widetilde\g_{i+1}^*$
  are not convex (with respect to $\O^*$) at any point, we deduce that
  the endpoints of $\widetilde\delta_i^*$ must be at
  $\partial_\infty\htwo$.

  Recall that the asymptotic intrinsic distance between
  $\g_i^*,\delta_i^*$ (resp. $\delta_i^*,\g_{i+1}^*$) is $\ell$, for
  any $i=1,\ldots,k$.  We can assume that $\g_1^* \subset \{t=0\}$ and
  $\delta_1^*\subset \{t=\ell\}$. We know that either $\g_2^* \subset
  \{t=0\}$ or $\g_2^* \subset \{t=2\ell\}$.  Let us prove that the
  second case is impossible.  We call $q_1^*\in\partial_\infty\htwo$
  the common endpoint of $\widetilde\g_1^*,\widetilde\delta_1^*$.
  Since $\Sigma^*$ is asymptotic to $\{q_1^*\}\times(0,\ell)$ when we
  approach $q_1^*$ within $\O^*$, then $\Sigma^*$ is locally below
  $\delta_1^*$ near the asymptotic point $(q_1^*,\ell)$ at infinity.
  Since $N^*$ is horizontal along $\delta_1^*$, then $\Sigma^*$ is
  locally below $\{t=\ell\}$ in a small neighborhood of $\delta_1^*$.
  In particular, $\Sigma^*$ is locally below $\delta_1^*$ near
  $(q_2^*,\ell)$, where $q_2^*$ is the common endpoint of
  $\widetilde\delta_1^*,\widetilde\g_2^*$. Thus $\Sigma^*$ cannot be
  asymptotic to $\{q_2^*\}\times(\ell,2\ell)$, and then $\g_2^*
  \subset \{t=0\}$. Arguing similarly we prove
  \[
  \g_1^*,\ldots,\g_k^* \subset \{t=0\}\qquad \mbox{and}\qquad
  \delta_1^*,\ldots,\delta_k^* \subset \{t=\ell\}.
  \]

  Finally we obtain {\it (4)} by the maximum principle.
\end{proof}

Since $\Sigma^*$ is a graph, it is in particular embedded. By
reflecting $\Sigma^*$ about the horizontal plane $\{t=\ell\}$ we get a
surface $M$ whose boundary consists of horizontal curvature lines of
symmetry at heights $0$ and $2\ell$, which differ by the translation
by $T=(0,0,2\ell)$. Moreover, $M$ is embedded, as
$\Sigma^*-\partial\Sigma^*\subset\{0<t<\ell\}$.

Extending $\Sigma^*$ by symmetry with respect to the horizontal planes
at heights multiple of $\ell$, we obtain an embedded singly periodic
minimal surface $\mathcal{M}$ with period $T=(0,0,2\ell)$.
Furthermore, $\mathcal{M}$ is proper, by item {\it (2)} in
Proposition~\ref{parallel.planes}.  It is easy to see that the
quotient of $\mathcal{M}$ by $T$ has genus $0$ and $2k$ ends
asymptotic to flat vertical annuli (named vertical Scherk-type ends).
$\mathcal{M}$ is called a {\it Saddle Tower}.

Moreover, Nelli and Rosenberg~\cite{NR} proved that $\Sigma$ has total
curvature $2\pi(1-k)$. Thus the same holds for $\Sigma^*$, and the
fundamental domain $M$ of $\mathcal{M}$ has total curvature
$4\pi(1-k)$.

To finish Theorem~\ref{th:main}, it remains to prove that, given
$k\geqslant 2$ and $\ell\in(0,+\infty)$, there exists $(2k-3)$
possible Jenkins-Serrin domains $\O$ with $2k$ edges
$A_1,B_1,A_2,B_2,\ldots,A_k,B_k$ of length $\ell$, after identifying
them by isometries of $\htwo$.  Up to an isometry of $\htwo$ we can
assume that $A_1$ is fixed; i.e. the vertices $p_1,p_2$ are fixed.
Observe that once we have chosen the vertices $p_3,\ldots,p_{2k-1}$,
then the vertex $p_{2k}$ is determined by $p_{2k-1}$ and $p_1$, as
$\O$ is a Jenkins-Serrin domain.  Each vertex $p_i$,
$i=3,\ldots,2k-1$, is at distance $\ell$ from $p_{i-1}$, hence $p_i$
is determined by the interior angle $\theta_{i-1}$ at $p_{i-1}$ (i.e.
the interior angle at $p_{i-1}$ between the edges in $\partial\O$
which have $p_{i-1}$ as a common endpoint). Since $\O$ is convex,
$0\leqslant \theta_{i-1}\leqslant\pi$. Additional constraints for
$\theta_{i-1}$ come from the facts that $\partial\O$ is closed, and
that $\O$ is a Jenkins-Serrin domain.  We have obtained that the space
of Jenkins-Serrin domains $\O$ with $2k$ edges of length $\ell$, one
of them fixed, has $2k-3$ freedom parameters $\t_2,\ldots,\t_{2k-2}$.
This proves Theorem~\ref{th:main}.

\begin{remark}[Symmetric case]\label{rem:symmetry}
  {\rm \begin{enumerate}
    \item[]
    \item In the case the vertices $p_{2i-1}$ of $\O$ are at distance
      $\lambda$ from a point of $\htwo$, say the origin ${\bf 0}$, and
      the vertices $p_{2i}$ of $\O$ are at distance $\mu$ from ${\bf
        0}$ (see Figure~\ref{Fig3}, left), then the graph $\Sigma$
      over $\O$ can be obtained (up to a vertical translation) by
      reflection from the minimal graph $\Sigma_T$ over a triangle $T$
      with vertices $p_1,p_2,{\bf 0}$ with boundary values $+\infty$
      along $A_1$ and $0$ along $\partial T-A_1$. Then $\Sigma$
      contains $k$ geodesic arcs at height $0$ meeting at ${\bf
        0}\in\Sigma$ equiangularly (as usual, we are identifying
      $\htwo\equiv\htwo\times\{0\}$):
   \begin{itemize}
   \item $k$ geodesic arcs of length $\lambda+\mu$, if $k$ is odd;
   \item $k/2$ geodesic arcs of length $2\lambda$ and $k/2$ geodesic
     arcs of length $2\mu$, when $k$ is even.
   \end{itemize}
   Those horizontal geodesics give us by conjugation $k$ vertical
   curvature lines of symmetry in $\Sigma^*$ (of the same length as in
   $\Sigma$) meeting with angle $\pi/k$.  Then $\Sigma^*$ can be
   obtained from $\Sigma_T^*$ by symmetries. 

 \item By uniqueness of the Jenkins-Serrin graphs, when $\lambda=\mu$
   we have that $\Sigma_T$ is symmetric with respect to the vertical
   plane which bisects $T$ at its vertex ${\bf 0}$.  That symmetry
   says that $\Sigma_T^*$ contains a horizontal straight line, and
   then we can obtain $\Sigma^*$ from half a $\Sigma_T^*$ bounded by a
   horizontal curvature line of symmetry $\gamma$ at height $0$, a
   vertical curvature line of symmetry $\alpha$ and a horizontal
   geodesic curve $L$ at height $\ell/2$; $\a,L$ meeting at an angle
   $\pi/(2k)$. All those symmetries allow us to obtaining the
   corresponding Saddle Tower more easily than in the general
   case. These symmetric examples are the ones Lee and Pyo are
   constructing~\cite{LP}.
 \end{enumerate}}
\end{remark}

\section{Properly embedded minimal surfaces of genus
  zero in $\htwo\times\R$}
\label{semi.ideal}

In this section we obtain as a limit of Saddle Towers with $2k$
vertical Scherk-type ends and period vector $(0,0,2\ell)$, with
$\ell\to+\infty$, a properly embedded minimal surface in
$\htwo\times\R$ with total curvature $4\pi(1-k)$, genus zero and $k$
ends asymptotic to vertical geodesic planes (Theorem~\ref{th:limits}).
It will be the conjugate surface of a Jenkins-Serrin graph over a
semi-ideal polygonal domain.

Consider a semi-ideal Jenkins-Serrin domain $\O$ with $2k$ vertices
$p_1,\ldots,p_{2k}$ cyclically ordered so that the vertices $p_{2i-1}$
are in the interior of $\htwo$, and the vertices $p_{2i}$ are at
$\partial_\infty{\mathbb H}^2$, for $i=1,\ldots,k$. As in the previous
section, call $A_i$ the edge of $\O$ whose endpoints are
$p_{2i-1},p_{2i}$, and $B_i$ the edge of $\O$ whose endpoints are
$p_{2i},p_{2i+1}$.  We also require that $\O$ satisfies the following
additional condition:

\begin{quote}
  ($\star$)\quad For each $p_{2i}\in \partial_\infty\htwo$, there
  exists a sufficiently small horocycle $H_{2i}$ such that it only
  intersects $\partial\O$ along $A_i,B_i$, and
  \[
  \mbox{dist}_\htwo(p_{2i-1},H_{2i})=\mbox{dist}_\htwo(p_{2i+1},H_{2i}).
  \]
\end{quote}
Observe we can choose the horocycles $H_{2i}$ so that, for any
$i=1,\ldots,k$,
\[
\mbox{dist}_\htwo(p_{2i-1},H_{2i})=\mbox{dist}_\htwo(p_{2i+1},H_{2i})=\ell_0
,
\]
independently of~$i$. Also we can choose them small enough so that
\begin{equation}
\label{condition.js}
\mbox{dist}_\htwo(p_{2i-1},p_{2i+1})<2\ell_0,\quad \mbox{for any } i=1,\ldots,k .
\end{equation}

Consider the nested sequence of horocycles $H_{2i}(n)$ at $p_{2i}$,
$n\geqslant 0$, converging to $p_{2i}$ as $n\to+\infty$, such that
$H_{2i}(0)=H_{2i}$ and
$\mbox{dist}_\htwo\left(H_{2i}(n+1),H_{2i}(n)\right)=1$.  We set
\[
\ell_n=\ell_0+n .
\]
We are going to obtain $\O$ as limit of Jenkins-Serrin domains $\O_n$
as $n\to+\infty$, each $\O_n$ with $2k$ edges of length $\ell_n$.

Firstly, we remark the following fact. Condition ($\star$) ensures the
existence of a horocycle $C_{2i}$ at $p_{2i}$ passing through
$p_{2i-1},p_{2i+1}$. Call $D_{2i}$ the component of $\htwo-C_{2i}$
whose only point of $\partial_\infty\htwo$ at its infinite boundary
is~$p_{2i}$ (i.e. $D_{2i}$ is the domain ``inside'' the horocycle
$C_{2i}$), and $\overline{D_{2i}}=D_{2i}\cup C_{2i}$. We get the
following lemma, since $\O$ is a Jenkins-Serrin domain.

\begin{lemma}\label{lem:horocycles}
  Every vertex $p_{2j-1}$ of $\Omega$, for $j\not\in\{i,i+1\}$, is
  contained in $\htwo-\overline{D_{2i}}$.
\end{lemma}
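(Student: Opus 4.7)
The plan is to apply the Jenkins--Serrin condition~(ii) of Definition~\ref{Def:JS} to a carefully chosen inscribed polygonal domain and derive a contradiction. Assume for contradiction that $p_{2j-1}\in\overline{D_{2i}}$ for some $j\notin\{i,i+1\}$. Pass to the upper half-plane model of $\htwo$ with $p_{2i}$ sent to $\infty$; then $\overline{D_{2i}}$ becomes the horizontal slab $\{y\geqslant 1\}$ (after normalizing so that $C_{2i}=\{y=1\}$), the horocycle $H_{2i}$ is $\{y=e^{\ell_0}\}$, the edges $A_i$ and $B_i$ are the vertical rays $\{x=x_1,\ y\geqslant 1\}$ and $\{x=x_2,\ y\geqslant 1\}$, and $p_{2i-1}=(x_1,1)$, $p_{2i+1}=(x_2,1)$. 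Writing $p_{2j-1}=(x_\ast,y_\ast)$, the hypothesis reads $y_\ast\geqslant 1$; and convexity of $\overline\Omega\subset\{x_1\leqslant x\leqslant x_2\}$ together with $p_{2j-1}\notin A_i\cup B_i$ forces $x_1<x_\ast<x_2$.

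The inscribed polygon $\mathcal{P}$ I will construct is bounded by the backward path along $\partial\Omega$ from $p_{2i}$ to $p_{2j-1}$, followed by the vertical chord $\sigma=\{x=x_\ast,\ y\geqslant y_\ast\}$ closing $\mathcal{P}$ by joining $p_{2j-1}$ back to $p_{2i}=\infty$. Since $\Omega$ near $p_{2i}$ coincides with the strip $\{x_1<x<x_2,\ y>1\}$ and $\overline\Omega$ is convex, $\sigma\subset\overline\Omega$. The backward path traverses the edges $A_i,B_{i-1},A_{i-1},\ldots,B_j,A_j$ in order (indices interpreted cyclically), using $i-j+1$ edges of type $A$ and $i-j$ of type $B$. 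Condition~$(\star)$ guarantees that each of these edges contributes exactly $\ell_0$ to $\Gamma(\mathcal{P})$, while the part of $\sigma$ outside $H_{2i}$ is the segment from $(x_\ast,y_\ast)$ to $(x_\ast,e^{\ell_0})$, of hyperbolic length $\ell_0-\log y_\ast$ (the part outside further horocycles is no larger). Hence
\[
\alpha(\mathcal{P})=(i-j+1)\ell_0,\quad \beta(\mathcal{P})=(i-j)\ell_0,\quad |\Gamma(\mathcal{P})|\leqslant\bigl(2(i-j)+2\bigr)\ell_0-\log y_\ast,
\]
and the inequality $2\alpha(\mathcal{P})<|\Gamma(\mathcal{P})|$ forces $0<-\log y_\ast$, i.e.\ $y_\ast<1$, contradicting the assumption $y_\ast\geqslant 1$. (If instead the forward path from $p_{2i}$ is the shorter route to $p_{2j-1}$, the analogous computation with the roles of $\alpha$ and $\beta$ swapped yields the contradiction from $2\beta(\mathcal{P})<|\Gamma(\mathcal{P})|$.)

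The delicate step will be to verify that $\mathcal{P}$ is a bona fide inscribed polygonal domain distinct from $\Omega$. Since two distinct geodesics of $\htwo$ meet in at most one point, $\sigma$ shares only its endpoints with the path edges $A_i$ and $A_j$; a transversal interior crossing of $\sigma$ with any other edge $B_{i-1},A_{i-1},\ldots$ would push $\sigma$ momentarily outside $\overline\Omega$, contradicting convexity, and a tangency would force $\sigma$ to coincide with that edge, which is impossible given the position of its endpoints. Hence $\partial\mathcal{P}$ is a simple closed curve and $\mathcal{P}\subset\overline\Omega$ by convexity. Finally, $\mathcal{P}\neq\Omega$ because the assumption $j\neq i+1$ (cyclically) ensures the backward path stops strictly short of $p_{2i+1}$, so $\mathcal{P}$ uses strictly fewer boundary edges of $\Omega$ than $\Omega$ itself does.
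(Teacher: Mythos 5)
Your argument is correct and is essentially the paper's own proof: both cut $\Omega$ along the geodesic from $p_{2j-1}$ to the ideal vertex $p_{2i}$, take the component containing $A_i$ (so that the path has one more $A$-edge than $B$-edges and each contributes one full truncated length), and use that $p_{2j-1}\in\overline{D_{2i}}$ forces the chord's truncated length to be at most one edge length, violating condition (ii) of Definition~\ref{Def:JS}. The one point to add is that condition (ii) is only guaranteed for \emph{sufficiently small} horocycles, so you should observe (as the paper does by running the estimate for every $n$) that your computation is unchanged when $H_{2i}$ is replaced by any smaller horocycle $H_{2i}(n)$, since each length involved simply shifts by $n$ and the inequality $2\alpha(\mathcal{P})\geqslant|\Gamma(\mathcal{P})|$ persists.
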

\begin{proof}
  Suppose there exists some $p_{2j-1}\in \overline{D_{2i}}$, with
  $j\not\in\{i,i+1\}$.  Then, for every $n$, we have
  \[
  \mbox{dist}_\htwo(p_{2j-1},H_{2i}(n))\leqslant\ell_n=
  \mbox{dist}_\htwo(p_{2i-1},H_{2i}(n))
  =\mbox{dist}_\htwo(p_{2i+1},H_{2i}(n)).
  \]
  Let $\gamma$ be the geodesic from $p_{2j-1}$ to $p_{2i}$, and
  $\mathcal{P}$ be the component of $\O-\gamma$ containing $A_i$ on
  its boundary. Clearly, $\mathcal{P}$ is a polygonal domain inscribed
  in $\O$. It holds $\beta(\mathcal{P})=\a(\mathcal{P})-\ell_n$ for
  this choice of horocycles, and then
  \[
  |\Gamma(\mathcal{P})|=\mbox{dist}_\htwo(p_{2j-1},H_{2i}(n))
  +\a(\mathcal{P})+\beta(\mathcal{P})\leqslant 2\a(\mathcal{P}).
  \]
  And this holds for every $n$, a contradiction as $\O$ is a
  Jenkins-Serrin domain.
\end{proof}

Now let us construct the Jenkins-Serrin domains $\O_n$. 
All the vertices $p_{2i-1}\in\htwo$ of $\O$ will be vertices of each
$\O_n$ as well. Let us obtain the vertices $p_{2i}(n)$ of $\O_n$ such
that, for each $i=1,\ldots,k$:
\begin{enumerate}
\item[(a)]
  $\mbox{dist}_\htwo(p_{2i}(n),p_{2i-1})=\mbox{dist}_\htwo(p_{2i}(n),p_{2i+1})=\ell_n$;
\item[(b)] $p_{2i}(n)\in\O$ and $p_{2i}(n)\to p_{2i}$ as $n\to+\infty$.
\end{enumerate}
By \eqref{condition.js},
$\mbox{dist}_\htwo(p_{2i-1},p_{2i+1})<2\ell_0<2\ell_n$. This
guarantees that the circles of radius $\ell_n$ centered at
$p_{2i-1},p_{2i+1}$ intersect at exactly two points, each one lying in
a different component of $\htwo-\g_{2i}$, where $\g_{2i}$ is the
complete geodesic passing through $p_{2i-1},p_{2i+1}$, see
Figure~\ref{Fig4}. We define $p_{2i}(n)$ as the intersection point of
those circles which is contained in the component of $\htwo-\g_{2i}$
having $p_{2i}$ at its boundary at infinity.  The point $p_{2i}(n)$
lies in the region of $\O$ bounded by $A_i,H_{2i}(n),B_i$ and
$\g_{2i}$.  By construction, $p_{2i}(n)$ verifies conditions (a) and
(b) above, and the constructed Jenkins-Serrin domain $\O_n$ has $2k$
edges of length $\ell_n$ and converges to $\O$ as $n\to+\infty$.  Call
$A_i(n)$ the edge of $\O_n$ whose endpoints are $p_{2i-1},p_{2i}(n)$,
and $B_i(n)$ the edge of $\O_n$ whose endpoints are
$p_{2i}(n),p_{2i+1}$.

\begin{figure}
  \begin{center}
    \epsfysize=8cm \epsffile{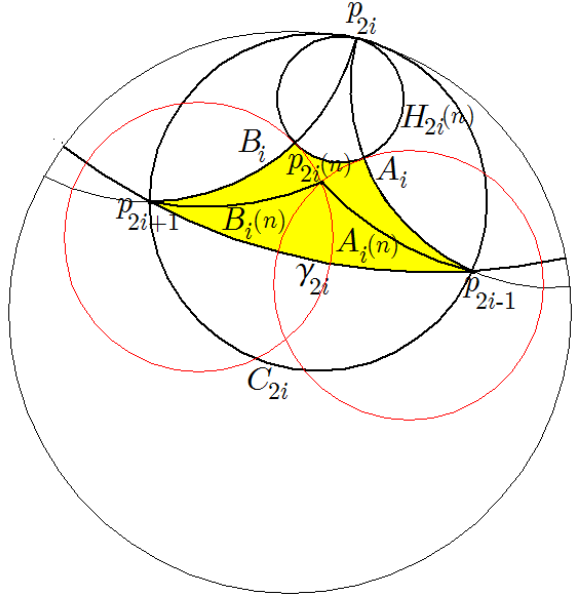}
  \end{center}
  \caption{Construction of the vertex $p_{2i}(n)$ of $\Omega_n$ as the
    intersection point in the shadowed region of the circles of radius
    $\ell_n$ centered at $p_{2i-1},p_{2i+1}$.}
  \label{Fig4}
\end{figure}

\begin{lemma}\label{lem:JS}
  For $n$ big enough, $\O_n$ is a Jenkins-Serrin domain.
\end{lemma}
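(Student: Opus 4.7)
Condition (i) in Definition~\ref{Def:JS} is immediate: $\Omega_n$ has no ideal vertex, so $\Gamma(\Omega_n)=\partial\Omega_n$, and $\alpha(\Omega_n)=k\ell_n=\beta(\Omega_n)$. For condition (ii) I argue by contradiction. Suppose there is a sequence of inscribed polygons $\mathcal{P}_n\subsetneq\Omega_n$ violating $2\alpha(\mathcal{P}_n)<|\partial\mathcal{P}_n|$ (the $\beta$-case being symmetric). Since there are only finitely many combinatorial types of inscribed polygons, up to a subsequence I may assume all $\mathcal{P}_n$ share one type; replacing each $p_{2i}(n)$ by its limit $p_{2i}$ produces an inscribed polygon $\mathcal{P}_\infty\subsetneq\Omega$, which, by the Jenkins-Serrin hypothesis on $\Omega$, satisfies $2\alpha(\mathcal{P}_\infty)<|\Gamma(\mathcal{P}_\infty)|$ with $\Gamma$ computed via the reference horocycles $H_{2i}=H_{2i}(0)$.

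The heart of the argument is an edge-by-edge comparison. Let $a_n,b_n$ count the A- and B-edges of $\mathcal{P}_n$, let $s_n$ (resp.~$d_n$) count the interior edges of $\mathcal{P}_n$ with exactly one (resp.~two) endpoints among the $p_{2j}(n)$'s, and let $V_n$ be the number of vertices of $\mathcal{P}_n$ of the form $p_{2j}(n)$. The construction of $p_{2i}(n)$ as the intersection of equal-radius circles around $p_{2i-1},p_{2i+1}$, combined with condition~($\star$), places $p_{2i}(n)$ at hyperbolic distance $n+o(1)$ from $H_{2i}(0)$ (a direct upper half-plane computation with $p_{2i}=\infty$). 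Consequently each boundary edge $A_i(n)$ or $B_i(n)$ has length $\ell_0+n$ whereas its counterpart contributes only $\ell_0$ to $|\Gamma(\mathcal{P}_\infty)|$, and each interior edge of $\mathcal{P}_n$ has length $L+n+o(1)$ per ideal-limit endpoint, where $L$ is the length of the corresponding limit edge outside $D_{2i}$. Summing,
\[
\bigl[2\alpha(\mathcal{P}_n)-|\partial\mathcal{P}_n|\bigr]
 - \bigl[2\alpha(\mathcal{P}_\infty)-|\Gamma(\mathcal{P}_\infty)|\bigr]
 = n\,(a_n-b_n-s_n-2d_n)+o(1).
\]
Double-counting the edge-endpoints lying at ideal vertices yields $a_n+b_n+s_n+2d_n=2V_n$, so the coefficient of $n$ equals $2(a_n-V_n)\leq 0$, since distinct A-edges have distinct ideal endpoints.

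If $a_n<V_n$, the right-hand side tends to $-\infty$, giving $2\alpha(\mathcal{P}_n)<|\partial\mathcal{P}_n|$ for $n$ large, a contradiction. If $a_n=V_n$, the linear term vanishes and $2\alpha(\mathcal{P}_n)-|\partial\mathcal{P}_n|=[2\alpha(\mathcal{P}_\infty)-|\Gamma(\mathcal{P}_\infty)|]+o(1)<0$ for $n$ large, again contradicting our assumption. Finitely many combinatorial types allow a uniform choice of $n_0$, and the $\beta$-inequality is handled symmetrically. The main obstacle is precisely the balance case $a_n=V_n$, in which the leading $n$-terms cancel exactly and one must verify that the corrections are genuinely $o(1)$ rather than $O(1)$; this is where condition~($\star$) and the explicit construction of $p_{2i}(n)$ become essential, as they guarantee that $p_{2i}(n)$ sits within a bounded hyperbolic distance of $H_{2i}(n)$.
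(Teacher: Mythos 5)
Your proof is essentially correct, but it follows a genuinely different route from the paper's. The paper first normalizes the bad inscribed polygon so that its boundary alternates $A$-edges with arcs $\g_j$ that are either $B$-edges or interior geodesics, then uses a pigeonhole argument ($\sum_j|\g_j|\leqslant s\ell_n$) to produce a single interior diagonal of length at most $\ell_n$; its odd endpoint would then lie in the disk of radius $\ell_n$ about some $p_{2i}(n)$, and since these disks converge to the horodisk $\overline{D_{2i}}$, this contradicts the separate Lemma~\ref{lem:horocycles} (which is where the Jenkins-Serrin property of $\Omega$ enters). You instead expand $2\a(\mathcal{P}_n)-|\partial\mathcal{P}_n|$ asymptotically in $n$ for a fixed combinatorial type and show, via the double-counting identity $a_n+b_n+s_n+2d_n=2V_n$, that the coefficient of $n$ is $2(a_n-V_n)\leqslant 0$, reducing everything to the strict inequality for the limit polygon $\mathcal{P}_\infty$ in $\Omega$. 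Your version avoids the normalization step and the auxiliary horocycle lemma, at the cost of having to justify the exact $n+o(1)$ asymptotics for every edge with a moving endpoint (which do hold: a half-plane computation with $p_{2i}=\infty$ gives $\mbox{dist}_\htwo(p_{2i}(n),H_{2i}(0))=n+o(1)$, and the Busemann-function asymptotics handle the interior diagonals); the paper's version only needs the cruder statement that the circles $S(n)$ converge to $C_{2i}$. Two small points to fix in your write-up: the truncation defining $L$ for the interior edges must be taken at the horocycles $H_{2i}(0)$ (or at $H_{2i}(m)$ for $m$ large enough that the strict inequalities for $\Omega$ hold there, which is legitimate by the remark after Definition~\ref{Def:JS}), not at the horocycles $C_{2i}$ bounding $D_{2i}$ --- otherwise the constants fail to cancel by $\ell_0$ per endpoint and the balanced case $a_n=V_n$ breaks; and in your closing sentence, ``bounded distance from $H_{2i}(n)$'' only yields $O(1)$ corrections, whereas the balanced case genuinely requires the $o(1)$ estimate you stated earlier.
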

\begin{proof}
  By construction, $\a(\O_n)=\b(\O_n)$.  Suppose there exists an
  inscribed polygonal domain $\mathcal{P}$ in $\O_n$,
  $\mathcal{P}\neq\O_n$, such that $|\partial\mathcal{P}| \leqslant
  2\a(\mathcal{P})$ (the case $|\partial\mathcal{P}| \leqslant
  2\b(\mathcal{P})$ follows similarly).  Since $\mathcal{P}\neq\O_n$,
  there is at least an interior geodesic $\g_1$ in
  $\partial{\mathcal{P}}$ (i.e.
  $\g_1\subset\partial{\mathcal{P}}\cap\O_n$). We can assume that
  there are no two consecutive interior geodesics $\g_1,\g_2$: we
  would replace $\mathcal{P}$ by another inscribed polygonal domain
  satisfying the same properties by replacing the geodesics
  $\g_1,\g_2$ by the geodesic $\g_3$ such that $\g_1\cup\g_2\cup\g_3$
  bound a geodesic triangle contained in $\O_n$. In a similar way, we
  can assume that
  \[
  \partial{\mathcal{P}}= A_{i_1}(n)\cup\g_1\cup\ldots\cup
  A_{i_j}(n)\cup\g_j\cup A_{i_{j+1}}(n)\cup\ldots\cup
  A_{i_s}(n)\cup\g_s ,
  \]
  where each $\g_j$ is either an interior geodesic or a $B_i(n)$ edge,
  and at least $\g_1\subset\O_n$.  In particular, each $\g_j$ joins an
  even vertex $q_{2j}(n)=p_{2i_j}(n)$ to an odd vertex $q_{2j+1}=p_{2
    i_{j+1}-1}$.  Remark that when $\g_j$ is a $B_i(n)$ edge, then
  $\g_j=B_{i_j}(n)$ and $i_{j+1}=i_j+1$.

  As $\sum_{j=1}^s |\g_j|= |\partial\mathcal{P}|-\a(\mathcal{P})
  \leqslant \a(\mathcal{P})=s\ell_n$, there must be some interior
  geodesic $\g_j\subset\partial{\mathcal{P}}$ whose length is smaller
  than or equal to $\ell_n$. Take the hyperbolic circle $S(n)$ of
  center $q_{2j}(n)$ and radius $\ell_n$, and let $D(n)$ be the
  hyperbolic disk bounded by $S(n)$, see Figure~\ref{Fig5}. Then the
  vertex $q_{2j+1}$ lies in $\overline{D(n)}=D(n)\cup S(n)$. Let us
  prove that this is not possible when $n$ is large.

  \begin{figure}
    \begin{center}
      \epsfysize=8cm \epsffile{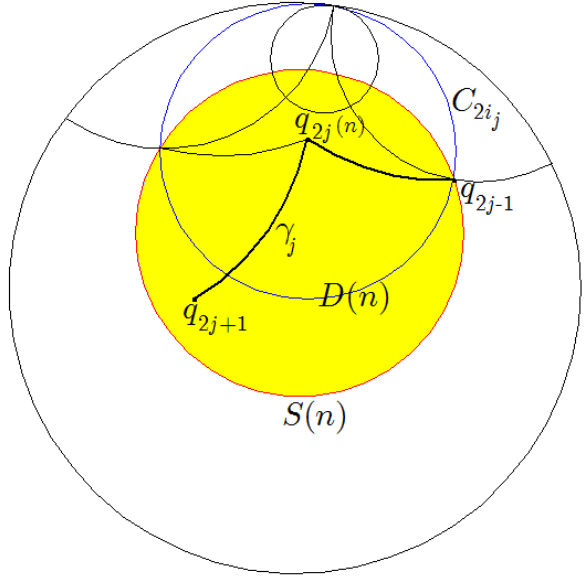}
    \end{center}
    \caption{The circle $S(n)$ of radius $\ell_n$ centered at
      $q_{2j}(n)$ converges to the horocycle $C_{2i_j}$ at $p_{2i_j}$
      as $n\to+\infty$.}
    \label{Fig5}
  \end{figure}

  The circles $S(n)$ converge to the horocycle $C_{2i_j}$ as
  $n\to+\infty$. And by Lemma~\ref{lem:horocycles}, $q_{2j+1}$ cannot
  be contained in the closed horodisk $\overline{D_{2i_j}}$ bounded by
  $C_{2i_j}$. Then $q_{2j+1}\in\htwo-\overline{D(n)}$ for $n$ large
  enough.
\end{proof}

Observe that the inscribed polygonal domain $\mathcal{P}_0$ whose
vertices are the vertices $p_{2i-1}$ of $\O$, is contained in all the
domains $\O_n$. Fix a point $p_0\in\mathcal{P}_0$.

By Theorem~\ref{th:JS}, there exists a solution $u$ (resp. $u_n$, for
any $n$) to the minimal graph equation defined over $\O$ (resp.
$\O_n$) with boundary values $+\infty$ over $A_i$ (resp. $A_i(n)$) and
$-\infty$ over $B_i$ (resp. $B_i(n)$). Denote by $\Sigma$ (resp.
$\Sigma_n$) the graph surface of $u$ (resp. $u_n$). Up to a vertical
translation we can assume $u(p_0)=u_n(p_0)=0$.  (Observe that we could
have exchanged the edges $A_i,B_i$, but the graph we would have
obtained would be $\Sigma$ up to a symmetry about $\htwo\times\{0\}$).

The domains $\O_n$ converge to $\O$. Since $\O$ is a Jenkins-Serrin
domain, there cannot exist divergence lines associated to the sequence
of graphs $u_n$ (see~\cite{MRR} for the definition of divergence lines
and for similar arguments). Since $u_n(p_0)=0$ for any $n\in\N$, a
subsequence of the $u_n$ converges uniformly on compact sets of $\O$
to a solution $u_\infty$ of the minimal graph equation. We can deduce
$u_\infty=u$.  Hence the minimal graphs $\Sigma_n$ converge to
$\Sigma$, after taking a subsequence.

Observe that the vertical straight lines
$\Gamma_i=\{p_{2i-1}\}\times\R$ are contained in the boundary of all
the $\Sigma_n$ and also of $\Sigma$. In particular, none of the
distances $\mbox{dist}_{\Sigma_n}(\Gamma_i,\Gamma_j)$ can diverge, for
any $i,j\in\{1,\ldots,k\}$. After passing to a subsequence, we can
assume that there exists a constant $C>0$ such that, for
  any $i,j\in\{1,\ldots,k\}$,
\[
\mbox{dist}_{\Sigma_n}(\Gamma_i,\Gamma_j)\leqslant C, \quad
\mbox{ for any } n\in\N,
\]
and $\mbox{dist}_{\Sigma}(\Gamma_i,\Gamma_j)\leqslant C$.

Denote by $\Sigma^*$ (resp. $\Sigma_n^*$) the conjugate surface of
$\Sigma$ (resp. $\Sigma_n$).  By Theorem~\ref{th:krust}, $\Sigma^*$ is
a minimal graph, as $\Sigma$ is a minimal graph over $\O$, which is
convex.  Moreover, $\partial\Sigma=\Gamma_1\cup\ldots\cup\Gamma_k$, so
the boundary of $\Sigma^*$ is composed of $k$ horizontal curvature
lines of symmetry $\Gamma_i^*$, by Lemma~\ref{lem:Schwarz}.  Since
$\Sigma,\Sigma^*$ are isometric, then
$\mbox{dist}_{\Sigma^*}(\Gamma_i^*,\Gamma_j^*)\leqslant C$ for every
$i,j\in\{1,\ldots,k\}$.  We want to prove that all the curves
$\Gamma_i^*$ lie in the same horizontal plane, say $\{t=0\}$, and
$\Sigma^*$ is contained in one of the half-spaces determined by
$\{t=0\}$.

For any $n$, $\Sigma_n$ is a graph over the convex domain
$\O_n$ and the boundary of $\Sigma_n$ equals
$\Gamma_1\cup\eta_1(n)\cup\ldots\Gamma_k\cup\eta_k(n)$, where each
$\Gamma_i$ is defined as above and $\eta_i(n)=\{p_{2i}(n)\}\times\R$,
for any $i=1,\ldots,k$.  By Proposition~\ref{parallel.planes},
$\Sigma_n^*$ is a graph over a domain $\O_n^*$ and 
\[
\partial\Sigma_n^*=\Gamma_1^*(n)\cup\eta_1^*(n)\cup\ldots\cup
\Gamma_k^*(n)\cup\eta_k^*(n) ,
\]
where $\Gamma_1^*(n),\ldots,\Gamma_k^*(n)$
(resp. $\eta_1^*(n),\ldots,\eta_k^*(n)$) are horizontal curvature
lines of symmetry contained in the same horizontal plane, and both
planes are at distance $\ell_n$ from each other.

Call $\widetilde\Gamma_i^*(n)$ (resp. $\widetilde\eta_i^*(n)$) the
vertical projection of $\Gamma_i^*(n)$ (resp. $\eta_i^*(n)$) over
$\{t=0\}$. Then
$\partial\O_n^*=\widetilde\Gamma_1^*(n)\cup\widetilde\eta_1^*(n)\cup\ldots\cup
\widetilde\Gamma_k^*(n)\cup\widetilde\eta_k^*(n)$, and two consecutive
curves in $\partial\O_n^*$ are asymptotic at $\partial_\infty\htwo$.
We know that, up to a vertical translation to have a fixed point, the
graphs $\Sigma_n^*$ converge to the graph $\Sigma^*$. It could be that
the boundary values of the graphs over the boundary curves
$\widetilde\Gamma_i^*(n)$ would diverge to $-\infty$; but this is not
possible as
$\mbox{dist}_{\Sigma_n^*}(\Gamma_i^*(n),\Gamma_j^*(n))\leqslant C$,
for any $i,j\in\{1,\ldots,k\}$ and any $n\in\N$.  Therefore, up to a
vertical translation we can assume that
$\Gamma_1^*(n),\ldots,\Gamma_k^*(n)\subset\{t=0\}$; the curves
$\widetilde\Gamma_i^*(n)=\Gamma_i^*(n)$ converge to the curves
$\Gamma_i^*$, and the graphs $\Sigma_n^*$ converge to~$\Sigma^*$. In
particular, $\partial\Sigma^*\subset\{t=0\}$.

Suppose $\eta_1^*(n),\ldots,\eta_k^*(n)\subset\{t=\ell_n\}$ (if they
are contained in $\{t=-\ell_n\}$ we argue similarly). The height of
each curve $\eta_i^*(n)$ diverge to $+\infty$, hence its projection
$\widetilde\eta_i^*(n)$ converge to the geodesic
$\widetilde\eta_i^*\in\htwo$ joining the corresponding endpoints of
$\Gamma_i^*,\Gamma_{i+1}^*$ at $\partial_\infty\htwo$.  Moreover,
since the graph $\Sigma^*(n)$ is contained in $\{t\geqslant 0\}$ for
any $n$, then the same holds for $\Sigma^*$. If we reflect
$\Sigma^*(n)$ with respect to $\{t=0\}$, we get a properly embedded
minimal surface $M$ of genus zero and $k$ planar ends in
$\htwo\times\R$ (the ends of $M$ is asymptotic to the vertical
geodesic planes $\widetilde\eta_i^*\times\R$).

Collin and Rosenberg~\cite{CR} proved that $\Sigma$ (and so
$\Sigma^*$) has total curvature $2\pi(1-k)$. Hence $M$ has total
curvature $4\pi(1-k)$.

To complete the proof of Theorem~\ref{th:limits}, it remains to show
that, given $k\geqslant 2,$ there exist $(2k-3)$ possible semi-ideal
Jenkins-Serrin domains $\O$ satisfying condition ($\star$),
after identifying them by isometries of $\htwo$.
Firstly we observe that, for $i=1,\ldots,k,$ the vertex $p_{2i} \in
\partial_\infty \htwo$ is determined once we have chosen $p_{2i-1}$
and $p_{2i+1}$, since $\O$ satisfies condition~($\star$). Thus we have
to compute the parameters which determine the vertices having odd
subindex.  We can assume that $p_1$ is fixed as well as the direction
of the geodesic arc $\a_1$ from $p_1$ to $p_3$. So the vertex $p_3$ is
determined by the length of $\a_1$. That gives the first parameter.
For $i=2,\ldots,k-1$, the vertex $p_{2i+1}$ is determined by both the
direction and the length of the geodesic arc $\a_i$ joining
$p_{2i-1},p_{2i+1}$ (the direction of $\a_i$ is given by the interior
angle at $p_{2i-1}$ between $\a_{i-1}$ and $\a_i$). So we have two
additional parameters for the remaining $k-2$ vertices of $\O$, and
the total number of freedom parameters equals $2k-3.$ This finishes
Theorem~\ref{th:limits}.

\begin{remark}[Symmetric case]\label{rem:symmetrySI}
  {\rm In the case the vertices $p_{2i-1}$ of $\O$ are at distance
    $\lambda$ from the origin ${\bf 0}$ of $\htwo$, then we can get
    $\Sigma^*$ as limit of symmetric surfaces as at item (1) of
    Remark~\ref{rem:symmetry} (when $\mu\to+\infty$). In particular,
    $\Sigma^*$ contains $k$ vertical curvature lines of symmetry (of
    infinite length when $k$ is odd; or $k/2$ of length $2\lambda$ and
    $k/2$ of infinite length when $k$ is even).  These are the
    examples constructed by Pyo in~\cite{P}.}
\end{remark}

\section{Jenkins-Serrin graphs over ideal polygonal domains}
\label{ideal}

In this section we prove Theorem~\ref{th:limitsJS}.  Let $\O$ be an
ideal Jenkins-Serrin domain with $2k$ vertices
$p_1,\ldots,p_{2k}\in\partial_\infty\htwo$ cyclically ordered.  We are
going to obtain $\O$ as limit of semi-ideal Jenkins-Serrin domains
$\O_n$ contained in $\O$. Using the previous section and a diagonal
argument, that fact ensures we can also obtain $\O$ as limit of
bounded Jenkins-Serrin domains $\widetilde\O_n$, each $\widetilde\O_n$
with $2k$ edges of the same length.

As in the previous sections, call $A_i$ the edge of $\O$ whose
endpoints are $p_{2i-1},p_{2i}$, and $B_i$ the edge of $\O$ whose
endpoints are $p_{2i},p_{2i+1}$.  For each $i=1,\ldots,2k$, we can
consider a horocycle $H_i$ at $p_i$ such that it only intersects
$\partial\O$ along the edges of $\O$ finishing at $p_i$, and
  \[
  \mbox{dist}_\htwo(H_i,H_{i-1})=\ell_0,
\]
for some $\ell_0>0$, as $\O$ satisfies the Jenkins-Serrin conditions.
Consider the nested sequence of horocycles $H_{i}(n)$ at $p_{i}$, with
$n\geqslant 0$, converging to $p_{i}$ as $n\to+\infty$, such that
$H_{2i}(0)=H_{2i}$ and
\[
\mbox{dist}_\htwo\left(H_{i}(n+1),H_{i}(n)\right)=1 .
\]
Define
\[
\delta_n= \max_{i=1,\ldots,k} \{\mbox{dist}_\htwo(B_{i-1}\cap
H_{2i-1}(n), A_i\cap H_{2i-1}(n)), \mbox{dist}_\htwo(A_i\cap
H_{2i}(n),B_i\cap H_{2i}(n))\} ,
\]
which converges to zero as $n$ goes to $+\infty$, and
\[
\ell_n=\ell_0+2n+2\delta_n,
\]
which diverges to $+\infty$ as $n\to+\infty$.

The even vertices $p_{2i}$ of $\O$ will be vertices of each $\O_n$ as
well. Let us define the vertices $p_{2i-1}(n)$ of $\O_n$ converging to
$p_{2i-1}$. Consider the horocycle $C_{2i}(n)$ at $p_{2i}$ such that
\[
\mbox{dist}_\htwo\left(C_{2i}(n),H_{2i}(n)\right)=\ell_n.
\]
Since $\mbox{dist}_\htwo\left(H_{2i-2}(n),H_{2i}(n)\right) < 2\ell_n$,
then the horocycles $C_{2i-2}(n),C_{2i}(n)$ intersect at exactly two
points, each one lying in a different component of $\htwo-\g_{2i-1}$,
where $\g_{2i-1}$ is the complete geodesic from $p_{2i-2}$ to
$p_{2i}$, see Figure~\ref{Fig6}.  We define $p_{2i-1}(n)$ as the
intersection point of those horocycles which is contained in the
component of $\htwo-\g_{2i-1}$ having $p_{2i-1}$ at its infinite
boundary. Let $\Upsilon_{2i-1}(n)$ be the horocycle at $p_{2i-1}$
passing through the points $B_{i-1}\cap C_{2i-2}(n)$ and $A_i\cap
C_{2i}(n)$.  The point $p_{2i-1}(n)$ lies in the region of $\O$
bounded by $B_{i-1},\Upsilon_{2i-1}(n),A_i$ and $\g_{2i}$, and
converges to $p_{2i-1}$ when $n$ goes to $+\infty$.  We define $\O_n$
as the semi-ideal polygonal domain whose vertices are $p_1(n),p_2,
\ldots, p_{2k-1}(n),p_{2i}$.  Call $A_i(n)$ the edge of $\O_n$ whose
endpoints are $p_{2i-1}(n),p_{2i}$, and $B_i(n)$ the edge of $\O_n$
whose endpoints are $p_{2i},p_{2i+1}(n)$.
\begin{figure}
  \begin{center}
    \epsfysize=8cm \epsffile{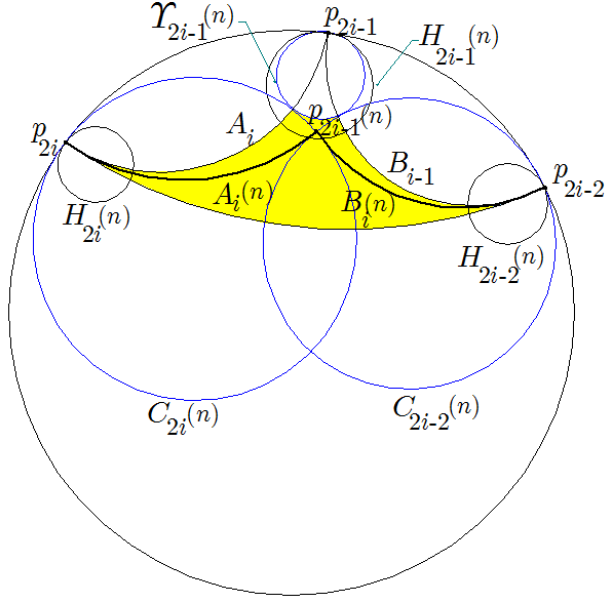}
  \end{center}
  \caption{$\mbox{dist}_\htwo\left(H_{2i-2}(n),H_{2i-1}(n)\right)=
    \mbox{dist}_\htwo\left(H_{2i-1}(n),H_{2i}(n)\right)=\ell_0+2n$ and
    $\mbox{dist}_\htwo\left(C_{2i-2}(n),H_{2i-2}(n)\right)=
    \mbox{dist}_\htwo\left(C_{2i}(n),H_{2i}(n)\right)=\ell_n$.  We get
    the vertex $p_{2i-1}(n)$ of $\Omega_n$ as the intersection point
    in the shadowed region of the horocycles $C_{2i-2}(n)$ at
    $p_{2i-2}$ and $C_{2i}(n)$ at $p_{2i}$.}
  \label{Fig6}
\end{figure}

\begin{lemma}
  For $n$ big enough, the semi-ideal domain $\O_n$ satisfies the
  conditions of Jenkins-Serrin.
\end{lemma}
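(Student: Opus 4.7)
The plan is to verify the two Jenkins-Serrin conditions of Definition~\ref{Def:JS} for $\O_n$, using the horocycle $H_{2i}(n)$ at each ideal vertex $p_{2i}$.

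Condition (i) is immediate from the construction: since $p_{2i\pm 1}(n)\in C_{2i}(n)$ and $\mathrm{dist}_\htwo(C_{2i}(n),H_{2i}(n))=\ell_n$, and since every geodesic ending at $p_{2i}$ meets every horocycle at $p_{2i}$ orthogonally, one has $|A_i(n)\cap\Gamma(\O_n)|=|B_i(n)\cap\Gamma(\O_n)|=\ell_n$, hence $\a(\O_n)=\b(\O_n)=k\ell_n$.

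For condition (ii) I would argue by contradiction, in the spirit of Lemma~\ref{lem:JS}. Suppose, up to a subsequence in $n$, there exist inscribed polygonal domains $\mathcal{P}_n\neq\O_n$ in $\O_n$ satisfying $|\Gamma(\mathcal{P}_n)|\leqslant 2\a(\mathcal{P}_n)$ (the symmetric case with $\b$ is analogous). By the same reduction as in the proof of Lemma~\ref{lem:JS} (merging consecutive interior geodesics), I may assume
\[
\partial\mathcal{P}_n=A_{i_1}(n)\cup\g_1(n)\cup\ldots\cup A_{i_s}(n)\cup\g_s(n),
\]
where each $\g_j(n)$ is either a $B_{i_j}(n)$ edge of $\O_n$ or an interior geodesic, with at least one interior geodesic as $\mathcal{P}_n\neq\O_n$. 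Since $|B_{i_j}(n)\cap\Gamma(\mathcal{P}_n)|=\ell_n$ and $\a(\mathcal{P}_n)=s\ell_n$, the assumed inequality forces the existence of some interior geodesic $\g_{j_0}(n)$ with $|\g_{j_0}(n)\cap\Gamma(\mathcal{P}_n)|\leqslant\ell_n$. This $\g_{j_0}(n)$ joins the ideal vertex $p_{2i_{j_0}}$ to the interior vertex $p_{2i_{j_0+1}-1}(n)$, with $i_{j_0+1}\neq i_{j_0}+1$, and by orthogonality of geodesics and horocycles at $p_{2i_{j_0}}$ the bound rewrites as $\mathrm{dist}_\htwo(p_{2i_{j_0+1}-1}(n),H_{2i_{j_0}}(n))\leqslant\ell_n$, meaning that $p_{2i_{j_0+1}-1}(n)$ lies in the closed band between $H_{2i_{j_0}}(n)$ and $C_{2i_{j_0}}(n)$.

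After passing to a further subsequence so that $\mathcal{P}_n$ has fixed combinatorial type, $\mathcal{P}_n$ converges to an inscribed polygonal domain $\mathcal{P}$ in the ideal Jenkins-Serrin domain $\O$, in which $\g_{j_0}$ becomes an interior geodesic joining the non-adjacent ideal vertices $p_{2i_{j_0}}$ and $p_{2i_{j_0+1}-1}$ of $\O$. The final step is to transfer the presumed violation for $\mathcal{P}_n$ in $\O_n$ into a violation of condition~(ii) for $\mathcal{P}$ in $\O$ at some sufficiently deep level of horocycles, contradicting that $\O$ is Jenkins-Serrin. The hard part will be the asymptotic bookkeeping at the odd vertices $p_{2a-1}$, which are ideal in $\O$ but interior in $\O_n$: exploiting $\ell_n=\ell_0+2n+2\delta_n$ together with the defining relation $p_{2a-1}(n)=C_{2a-2}(n)\cap C_{2a}(n)$, I will show that each edge of $\mathcal{P}_n$ contributes to $|\Gamma(\mathcal{P}_n)|$ (with no horocycle truncation at $p_{2a-1}(n)$) the same value as the corresponding edge of $\mathcal{P}$ in $\O$ at level-$n$ horocycles (truncated at $H_{2a-1}(n)$), up to an error of order $\delta_n=o(1)$; the strict Jenkins-Serrin inequality for $\O$ will then survive the limit and yield the contradiction.
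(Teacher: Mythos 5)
Your setup and reduction follow the paper's strategy faithfully: the computation of condition (i), the normalization of $\partial\mathcal{P}_n$, the extraction of an interior geodesic $\g_{j_0}(n)$ with truncated length at most $\ell_n$, and the localization of its interior endpoint inside the horoball bounded by $C_{2i_{j_0}}(n)$ are all correct and are exactly the paper's first moves. But the proof stops where the actual content of the lemma begins. The ``asymptotic bookkeeping'' you announce in the future tense is the whole point, and two specific things are left unproved. First, you need a genuine argument that the truncated length of $\g_{j_0}(n)$ --- which is truncated at only \emph{one} end, since its other endpoint $p_{2a-1}(n)$ is an interior point of $\htwo$ --- agrees up to $o(1)$ with the length of the limit geodesic $\g_{j_0}$ in $\O$ truncated at \emph{both} of its ideal endpoints by level-$n$ horocycles. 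This requires locating $p_{2a-1}(n)$ relative to $H_{2a-1}(n)$ (this is what the correction $2\delta_n$ in $\ell_n=\ell_0+2n+2\delta_n$ is designed to control: $C_{2a-2}(n)$ and $C_{2a}(n)$ meet $B_{a-1}$ and $A_a$ at depth exactly $2\delta_n$ inside $H_{2a-1}(n)$) and then a Lipschitz estimate for the Busemann function at the opposite ideal vertex; none of this appears. Second, for ``the strict inequality to survive the limit'' you must note that for an ideal inscribed polygon $\mathcal{P}$ the defect $|\Gamma(\mathcal{P})|-2\a(\mathcal{P})$ is \emph{independent} of the uniform horocycle level (pushing all horocycles in by $\tau$ adds $2\tau$ to each of the $2s$ edges, hence $4s\tau$ to both terms); otherwise the positive gap could a priori shrink to $0$ as $n\to\infty$ and your $o(1)$ error would prove nothing. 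As written, the proposal is a plausible plan with the decisive estimate deferred, which is a genuine gap.

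It is worth knowing that the paper sidesteps the global edge-by-edge comparison you propose. It fixes an auxiliary level $m$, isolates the single inequality $|\g_j(n)\cap\Gamma(\mathcal{P}_n)|\leqslant|A_{i_j}(n)\cap\Gamma(\mathcal{P}_n)|$ between the short interior geodesic and the $A$-edge sharing its interior endpoint $p_{2i_j-1}(n)$, and then truncates \emph{both} curves additionally by the horocycle $H_{2i_j-1}(m)$ at the limiting ideal vertex; since the $A$-edge runs essentially straight into $p_{2i_j-1}$, this extra truncation removes at least as much from $\g_j(n)$ as from $A_{i_j}(n)$, so the inequality persists, and it now passes cleanly to the limit $n\to+\infty$ inside $\O$. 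Because all edges of $\O$ have equal truncated length, the resulting limit inequality is precisely the Jenkins-Serrin defect of the inscribed domain of $\O$ cut off by $\g_j$, and letting $m$ vary yields the contradiction. If you want to complete your version instead, the two missing ingredients identified above are what you would have to supply.
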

\begin{proof}
  We take the horocycles $H_{2i}(m)$, for some fixed $m$, to compute
  the truncated lengths of geodesics in $\O_n$ (see
  Subsection~\ref{subsec}).  By construction, $\a(\O_n)=\b(\O_n)$.
  Suppose there exists an inscribed polygonal domain $\mathcal{P}_n$
  in $\O_n$, $\mathcal{P}_n\neq\O_n$, such that
  $|\Gamma(\mathcal{P}_n)| \leqslant 2\a(\mathcal{P}_n)$ (the case
  $|\Gamma(\mathcal{P}_n)| \leqslant 2\b(\mathcal{P}_n)$ follows
  similarly).  Since $\mathcal{P}_n\neq\O_n$, there is at least an
  interior geodesic arc $\g_1(n)$ in $\partial{\mathcal{P}_n}$ (i.e.
  $\g_1(n)\subset\partial{\mathcal{P}_n}\cap\O_n$). As in the proof of
  Lemma~\ref{lem:JS}, we can assume that
  \[
  \partial{\mathcal{P}_n}= \g_1(n)\cup A_{i_1}(n)\cup\ldots\cup
  \g_s(n)\cup A_{i_s}(n) ,
  \]
  where each $\g_j(n)$ is either an interior geodesic or a $B_i(n)$
  edge in $\partial\O_n$, and at least $\g_1(n)$ is an interior
  geodesic.  In particular, each $\g_j(n)$ joins an even vertex
  $q_{2j}=p_{2i_{j-1}}$ to an odd vertex $q_{2j+1}(n)=p_{2 i_j-1}(n)$.

  Since $\sum_{j=1}^s |\g_j(n)\cap\Gamma(\mathcal{P}_n)|=
  |\Gamma(\mathcal{P}_n)|-\a(\mathcal{P}_n) \leqslant
  \a(\mathcal{P}_n)= s(\ell_n-n+m)$, there must be some interior
  geodesic $\g_j(n)\subset\partial{\mathcal{P}_n}$ such that
  \[
  |\g_j(n)\cap\Gamma(\mathcal{P}_n)|\leqslant \ell_n-n+m=
  |A_{i_j}(n)\cap\Gamma(\mathcal{P}_n)|.
  \]We observe that
  $d(n)=|\g_j(n)\cap\Gamma(\mathcal{P}_n)|-
  |A_{i_j}(n)\cap\Gamma(\mathcal{P}_n)|$, which is non-positive, does
  not depend on $m$.

  Let $\widetilde\Gamma(\mathcal{P}_n)$ be the part of
  $\Gamma(\mathcal{P}_n)$ outside the horocycles $H_{2i-1}(m)$ (we are
  removing a compact part of $\Gamma(\mathcal{P}_n)$), and define
  \[
  \textstyle{\widetilde d(n)=|\g_j(n)\cap\widetilde\Gamma(\mathcal{P}_n)|-
  |A_{i_j}(n)\cap\widetilde\Gamma(\mathcal{P}_n)|
  \stackrel{(\clubsuit)}{\leqslant} d(n)\leqslant 0,}
  \]
  where in {\tiny$(\clubsuit)$} we have used that $A_{i_j}(n)$ is
  contained in a complete geodesic curve finishing at $p_{2 i_j-1}$.

  Now we take the horocycles $H_{2i-1}(m),H_{2i}(m)$ for computing the
  truncated lengths of geodesics of infinite length contained in
  $\O$. Consider the polygonal domain $\mathcal{P}$ inscribed in $\O$
  obtained from $\mathcal{P}_n$ by replacing the vertices which are
  odd vertices $p_{2i-1}(n)$ of $\O_n$ by the corresponding vertices
  $p_{2i-1}$ of $\O$, and denote by $\g_j\subset\partial\mathcal{P}$
  the geodesic from $q_{2j}=p_{2i_{j-1}}$ to $q_{2j+1}=p_{2
    i_{j}-1}$. It is clear that $\mathcal{P}_n$ (resp. $\g_j(n)$,
  $A_{i_j}(n)$) converges to $\mathcal{P}$ (resp. $\g_j$, $A_{i_j}$)
  as $n\to+\infty$. Thus $\widetilde d(n)$ converges to
  $|\g_j\cap\Gamma(\mathcal{P})|-
  |A_{i_j}\cap\Gamma(\mathcal{P})|$. In particular
  $|\g_j\cap\Gamma(\mathcal{P})|\leqslant
  |A_{i_j}\cap\Gamma(\mathcal{P})|$.

  Let $\mathcal{P}_1$ be the component of $\O-\g_j$ containing
  $A_{i_j}$ on its boundary. Clearly, $\mathcal{P}_1$ is a polygonal
  domain inscribed in $\O$, and it holds
  \[
  |\Gamma(\mathcal{P}_1)|- 2\a(\mathcal{P}_1)=
  |\g_j\cap\Gamma(\mathcal{P})|-
  |A_{i_j}\cap\Gamma(\mathcal{P})|\leqslant 0.
  \]
  As this holds for any $m$, we get a contradiction with $\O$ being a
  Jenkins-Serrin domain.
\end{proof}

Using Section~\ref{semi.ideal} and a diagonal argument, we obtain $\O$
as limit of bounded Jenkins-Serrin domains $\widetilde\O_n$, each
$\widetilde\O_n$ with $2k$ edges of length $\ell_n$ and vertices
$p_1(n),\ldots, p_{2k}(n)$ (the odd vertices $p_{2i-1}(n)$ are also
vertices of the semi-ideal domain $\O_n$). Call $\widetilde A_i(n)$
(resp. $\widetilde B_i(n)$) the edge of $\O_n$ whose endpoints are
$p_{2i-1}(n),p_{2i}(n)$ (resp. $p_{2i}(n),p_{2i+1}(n)$).

We now pass to the conjugate surfaces arguing similarly as in
Section~\ref{semi.ideal}. Fix a point $p_0\in\O$. For $n$ large
enough, $p_0\in\widetilde\O_n$.  By Theorem~\ref{th:JS}, there exists
a solution $u$ (resp. $u_n$, for any $n$) to the minimal graph
equation defined over $\O$ (resp. $\widetilde\O_n$), with boundary
values $+\infty$ over $A_i$ (resp. $\widetilde A_i(n)$) and $-\infty$
over $B_i$ (resp. $\widetilde B_i(n)$). Denote by $\Sigma$ (resp.
$\Sigma_n$) the graph surface of $u$ (resp. $u_n$). Up to a vertical
translation we can assume $u(p_0)=u_n(p_0)=0$.  Since $\O$ is a
Jenkins-Serrin domain, there cannot exist divergence lines associated
to the sequence of graphs $u_n$. Hence a subsequence of
$\{\Sigma_n\}_n$ converges uniformly on compact sets to $\Sigma$.

Denote by $\Sigma^*$ (resp. $\Sigma_n^*$) the conjugate surface of
$\Sigma$ (resp. $\Sigma_n$). Remark that $\Sigma^*$ has no boundary,
as $\partial\Sigma=\emptyset$. Since $\Sigma$ and all the $\Sigma_n$
contain a point $p_0$, we can take a fixed point $p_0^*\in\htwor$
(corresponding by conjugation to $p_0$) contained in $\Sigma^*$ and in
$\Sigma_n^*$, for any $n$. Then the surfaces $\Sigma_n^*$ converge to
$\Sigma^*$, as $n\to+\infty$.

By Proposition~\ref{parallel.planes}, $\Sigma_n^*$ is a graph over a
domain $\O_n^*$ and
\[
\partial\Sigma_n^*=\Gamma_1^*(n)\cup\eta_1^*(n)\cup\ldots\cup
\Gamma_k^*(n)\cup\eta_k^*(n) ,
\]
where $\Gamma_1^*(n),\ldots,\Gamma_k^*(n)$
(resp. $\eta_1^*(n),\ldots,\eta_k^*(n)$) are horizontal curvature
lines of symmetry contained in the same horizontal plane, and both
planes are at distance $\ell_n$ from each other.  Assume
\[
\Gamma_1^*(n),\ldots,\Gamma_k^*(n)\subset\{t=-t_n\}\quad \mbox{ and
}\quad \eta_1^*(n),\ldots,\eta_k^*(n)\subset\{t=\ell_n-t_n\} .
\]
Since the distance from $p_0$ to each one of the $2k$ components of
$\partial\Sigma_n$ (which are vertical straight lines over
the vertices of $\widetilde\O_n$) diverges to $+\infty$ as
$n\to+\infty$, the same does for the distance from $p_0^*$ to each
component of $\partial\Sigma_n^*$. Thus both $t_n$ and
$\ell_n-t_n$ diverge to $+\infty$.

Call $\widetilde\Gamma_i^*(n)$ (resp. $\widetilde\eta_i^*(n)$) the
vertical projection of $\Gamma_i^*(n)$ (resp. $\eta_i^*(n)$) over
$\{t=0\}$. Then
$\partial\O_n^*=\widetilde\Gamma_1^*(n)\cup\widetilde\eta_1^*(n)\cup\ldots\cup
\widetilde\Gamma_k^*(n)\cup\widetilde\eta_k^*(n)$, and two consecutive
curves in $\partial\O_n^*$ are asymptotic at $\partial_\infty\htwo$.
As $t_n$ (resp. $\ell_n-t_n$) diverges to $+\infty$ as $n\to+\infty$,
then $u_n|_{\Gamma_i^*}(n)$ (resp. $u_n|_{\eta_i^*(n)}$) diverge to
$-\infty$ (resp. $+\infty$). We deduce that each $\Gamma_i^*(n)$
(resp. $\eta_i^*(n)$) converges to a geodesic $\Gamma_i^*$ (resp.
$\eta_i^*$). Therefore, the domains $\O_n^*$ converge to an ideal
polygonal domain $\O^*$, with
$\partial\O^*=\Gamma_1^*\cup\eta_1^*\cup\ldots\cup\Gamma_k^*\cup\eta_k^*$,
and $\Sigma^*$ is a Jenkins-Serrin graph over $\O^*$ with boundary
values $-\infty$ at each $\Gamma_i^*$ and $+\infty$ at each
$\eta_i^*$.

To finish Theorem~\ref{th:limitsJS}, it remains to prove that, given
$k\geqslant 2$, there exists $(2k-3)$ possible ideal Jenkins-Serrin
domains $\O$, after identifying them by isometries of $\htwo$.  We can
assume that the vertices $p_1,p_2$ are fixed.  Once chosen the
vertices $p_3,\ldots,p_{2k-1}$, the vertex $p_{2k}$ is determined by
$p_{2k-1}$ and $p_1$, as $\O$ is a Jenkins-Serrin domain. Hence we
have $2k-3$ freedom parameters.  This completes
Theorem~\ref{th:limitsJS}.

\begin{remark}[Symmetric case]
  {\rm If $\O$ can be obtained by reflection from an ideal triangle
    $T$ with vertices $p_1,p_2,{\bf 0}$, then $\Sigma^*$ can be got as
    a limit of symmetric Saddle Towers as at item (2) of
    Remark~\ref{rem:symmetry} (when $\lambda=\mu\to+\infty$). Then
    $\Sigma^*$ contains $k$ horizontal geodesics and $k$ vertical
    curvature lines of symmetry, all meeting at the origin with angle
    $\pi/(2k)$. We deduce that $\Sigma^*=\Sigma$; i.e. the
    Jenkins-Serrin graphs over symmetric ideal polygonal domains are
    self-conjugate. }
\end{remark}


\begin{thebibliography}{9}
\bibitem{CR} P. Collin, H. Rosenberg,
  {\it Construction of harmonic diffeomorphisms and minimal graphs},
  preprint.

\bibitem{D} B. Daniel,
  {\it Isometric immersions into ${\mathbb S}^n \times \R $ and
    ${\mathbb H}^n \times \R$ and applications to minimal surfaces},
  Comment. Math. Helv. 82 (2007), no. 1, 87-131. Zbl 1123.53029.


  \bibitem{HR} L. Hauswirth, H. Rosenberg,
  {\it Minimal surfaces of finite total curvature in $\Bbb
              H\times\Bbb R$}, Mat. Contemp., 31 (2006), 65-80.
    MR2385437, Zbl 1144.53323.

  
\bibitem{HST} L. Hauswirth, R. Sa Earp, E. Toubiana,
  {\it Associate and conjugate minimal immersions in ${\mathbb H}^2
    \times \R$}, Tohoku Math. J., (2) 60 (2008), no. 2, 267-286.
    MR2428864, Zbl 1153.53041.

\bibitem{JS} H. Jenkins, J. Serrin,
 {\it Variational problems of minimal surface type II. 
 Boundary value problems for the minimal surfaces equation},
Arch. Rational Mech. Anal., 21 (1966), 321-342. MR0190811,
Zbl 0171.08301.

\bibitem{K1} H. Karcher,
  {\it Embedded minimal surfaces derived from Scherk examples},
  Manuscripta Math., 62, 83-114, 1988. MR0958255, Zbl 0658.53006.

\bibitem{K2} H. Karcher,
 {\it Construction of minimal surfaces}, 
 Surveys in Geometry, 1-96,
 University of Tokyo, 1989 and Lecture Notes n. 12, SFB256, Bonn,
 1989.

\bibitem{K3} H. Karcher, {\it Introduction to conjugate Plateau
    constructions}, Global theory of minimal surfaces, Clay
  Math. Proc., vol. 2, 137--161, Amer. Math. Soc., Providence, RI
  (2005). MR2167258.


\bibitem{LP} H. Lee, J. Pyo. Personal communication.

\bibitem{MRR} L. Mazet, M. M. Rodr\'\i guez, H. Rosenberg, 
  {\it The Dirichlet problem for the minimal surface equation with
    possible infinite boundary data over domains in a Riemannian
    surface}, preprint.

\bibitem{MRT} L. Mazet, M. M. Rodr\'\i guez, M. Traizet, 
{\it Saddle towers with infinitely many ends}, 
Indiana Univ. Math. J.,
  56 (6), 2821-2838 (2007). MR2375703.

\bibitem{NR} B. Nelli, H. Rosenberg,
  {\it Minimal surfaces in ${\mathbb H}^2 \times \R$}, Bull.
  Braz. Math. Soc., 33, (2002), 263-292. MR1940353,
  Zbl 1038.53011.

\bibitem{PT} J. Pérez and M. Traizet,
  {\it The classification of singly periodic minimal surfaces with
    genus zero and Scherk type ends}, Transactions of the AMS, 359
  (3), 965-990 (2007). MR2262839,Zbl 1110.53008.

\bibitem{P} J. Pyo, {\it New complete embedded minimal 
surfaces in $\htwo \times \R$}. Preprint.

\bibitem{S} H.F. Scherk,
  {\it Bemerkungen über die kleinste Fläche innerhalb gegebener
    Grenzen,} J. R. Angew. Math., 13, 185-208, 1935.
  
\end{thebibliography}
\end{document}